\def\bsuffix #1{#1}
\newcommand{\fracc}[2]{{(#1/#2)}}
\renewcommand{\epsilon}{\varepsilon}
\newcommand{\eqdef}{\stackrel{\mathrm{def}}{=}}
\newcommand{\straT}{\mathcal{C}}
\newcommand{\straTopt}{{\mathcal{C}^\star}}
\newcommand{\mcH}{\mathcal{H}}
\newcommand{\mcZ}{\mathcal{Z}}
\renewcommand{\Pr}{\mathbb{P}}
\newcommand{\E}{\mathbb{E}}
\newcommand{\Generator}{\mathcal{G}}
\newcommand{\R}{{\mathbb R}}
\newcommand{\N}{{\mathbb N}}
\newcommand{\Filt}{{\mathcal F}}
\newcommand{\FiltMP}{{\mathcal F}}
\newcommand{\FiltH}{\mathcal{H}}
\renewcommand{\SS}{\mathcal{S}}
\newcommand{\SSd}{D}
\newcommand{\exitST}{\mathfrak{m}}
\newcommand{\tauopt}{\tau^\straTopt}
\newcommand{\V}{V}
\newcommand{\eqref}[1]{(\ref{#1})}
\newcommand{\Indi}[1]{\mathbh{1}_{[#1]}}
\newtheorem{theorem}{Theorem}[section]
\newtheorem{lemma}[theorem]{Lemma}
\begin{document}
\begin{frontmatter}

\title{Minimizing the time to a decision}
\runtitle{Minimizing the decision time}

\begin{aug}
\author[A]{\fnms{Saul} \snm{Jacka}\ead[label=e1]{S.D.Jacka@warwick.ac.uk}},
\author[A]{\fnms{Jon} \snm{Warren}\ead[label=e2]{J.Warren@warwick.ac.uk}}
\and
\author[A]{\fnms{Peter} \snm{Windridge}\corref{}\ead[label=e3]{Peter.Windridge@warwick.ac.uk}}
\runauthor{S. Jacka, J. Warren and P. Windridge}
\affiliation{University of Warwick}
\address[A]{Department of Statistics\\
University of Warwick\\
Coventry CV4 7AL\\
United Kingdom\\
\printead{e1}\\
\hphantom{\textsc{E-mail}:\ }\printead*{e2}\\
\hphantom{\textsc{E-mail}:\ }\printead*{e3}} 
\end{aug}

\received{\smonth{5} \syear{2010}}
\revised{\smonth{7} \syear{2010}}

%
\begin{abstract}
Suppose we have three independent copies of a regular diffusion on $[0,1]$
with absorbing boundaries. Of these diffusions, either at least two are
absorbed at the upper boundary or at least two at the lower boundary.
In this way, they determine a majority decision between
0 and 1. We show that the strategy that always runs the diffusion whose
value is currently between the other two reveals the majority decision
whilst minimizing the total time spent running the processes.
\end{abstract}

%
\begin{keyword}[class=AMS]
\kwd[Primary ]{93E20}
\kwd[; secondary ]{60J70}.
\end{keyword}
\begin{keyword}
\kwd{Optimal stochastic control}
\kwd{dynamic resource allocation}
\kwd{multiparameter processes}
\kwd{ternary majority}.
\end{keyword}

\end{frontmatter}

\section{Introduction}\label{sec1}

Let $X_1,X_2$ and $X_3$ be three independent copies of a
regular diffusion on $[0,1]$ with absorbing boundaries.
Eventually, either at least two of the diffusions are absorbed
at the upper boundary of the interval or at least two are absorbed
at the lower boundary. In this way, the diffusions determine a
\textit{majority decision} between 0 and 1.

In order to identify this decision, we run the three processes---not
simultaneously, but switching from one to another---until we
observe at least two of them reaching a common boundary point. Our aim is
to switch between the processes in a way that minimizes the total time
required to find the majority decision.

More precisely, we allocate our time between the three
processes according to a suitably adapted $[0,\infty)^3$-valued increasing
process $\straT$ with $\sum_{i=1}^3 \straT_i(t) = t$. Such a process
is called a \textit{strategy} and $\straT_i(t)$ represents
the amount of time spent observing $X_i$ after $t \geq0$ units
of calendar time have elapsed. Accordingly, the process we observe is
\[
X^\straT\eqdef\bigl( X_1(\straT_1(t)), X_2(\straT_2(t)), X_3(\straT
_3(t)); t \geq0\bigr),
\]
and the \textit{decision time} $\tau^\straT$ for the strategy $\straT
$ is the first
time that two components of $X^\straT$ are absorbed at the same end point
of $[0,1]$, that is,
\[
\tau^\straT\eqdef\inf\bigl\{t \geq0\dvtx  X^\straT_i(t) = X^\straT_j(t)
\in\{0,1\} \mbox{ for distinct } i,j\bigr\}.
\]

In this paper, we find a strategy $\straTopt$ that minimizes this
time. Roughly speaking, $\straTopt$ runs whichever diffusion is
currently observed to have
``middle value'' (see Lemma \ref{l:stratexists} for a precise description).
Our main theorem is that the decision time $\tau^{\straTopt}$ of this
strategy is the \textit{stochastic minimum} of all possible decision
times, that is, the following theorem holds.

\begin{theorem}\label{t:stochminimality}
The decision time $\tau^{\straTopt}$ of the ``run the middle''
strategy $\straTopt$ given in Lemma \ref{l:stratexists} satisfies
\[
\Pr(\tau^{\straTopt} > t) = \inf_{\straT} \Pr(\tau^\straT> t)
  \qquad \mbox{for every } t \geq0,
\]
where the infimum is taken over all strategies and $\tau^\straT$
is the corresponding decision time.
\end{theorem}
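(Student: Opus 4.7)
The natural approach is dynamic programming together with a verification argument. Introduce the candidate value function
\[
V(x, t) \eqdef \Pr_x\!\left(\tauopt > t\right), \qquad x \in [0,1]^3,\; t \geq 0,
\]
defined as the survival function of the decision time under the proposed ``run the middle'' strategy $\straTopt$. The aim is to show that $\Pr_x(\tau^\straT > t) \geq V(x, t)$ for every admissible strategy $\straT$ and every $t \geq 0$, with equality at $\straT = \straTopt$.

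Writing $L_1, L_2, L_3$ for the one-dimensional generators of the three diffusions (viewed as operators on $[0,1]^3$ acting in a single coordinate), any strategy $\straT$ selects at each instant a convex combination of these; since the infimum of a linear functional over the probability simplex is attained at a vertex, dynamic programming formally yields the Hamilton--Jacobi--Bellman equation
\[
\partial_t V \;=\; \min_{i \in \{1,2,3\}} L_i V
\]
on the undecided states, with $V(\cdot, 0)$ equal to the indicator of the undecided set. For $\straTopt$ to achieve this infimum pointwise one needs the key identity $L_{m(x)} V = \min_i L_i V$, where $m(x)$ is the index of the middle coordinate at $x$; intuitively, this reflects the fact that the middle-valued diffusion is the one whose outcome is most uncertain and hence most informative to observe.

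Granted this HJB identity, the conclusion follows from a routine verification step. For an arbitrary strategy $\straT$ and fixed horizon $T > 0$, put
\[
M_s \;\eqdef\; V\!\left(X^\straT(s),\, T-s\right) \Indi{s < \tau^\straT}, \qquad s \in [0,T].
\]
It\^o's formula, applied across the switching instants of $\straT$, gives $M$ an instantaneous drift $L_{i(s)} V - \partial_t V \geq 0$, making it a submartingale (and a true martingale under $\straTopt$ by the minimising property of $m$). Optional stopping yields $V(x,T) = M_0 \leq \E[M_T]$; and since $V(\cdot,0)$ is the indicator of the undecided set, $M_T = \Indi{\tau^\straT > T}$, so $\Pr_x(\tau^\straT > T) \geq V(x,T) = \Pr_x(\tauopt > T)$, as required.

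The real obstacle is establishing the pointwise HJB inequality $L_{m(x)} V \leq L_i V$ for $i \neq m(x)$, which demands genuine structural information about $V$. My plan to attack it would be: first reduce via the scale function of the common diffusion to the case where each $X_i$ is a martingale diffusion and each $L_i$ is a purely second-order operator in its coordinate, then exploit the permutation symmetry of $V$. A natural route is to prove the discrete analogue first, approximating each $X_i$ by a simple symmetric random walk on a fine grid --- which reduces the question to a recursive-majority-of-three query-complexity problem (flagged in the authors' keywords) where the inequality can plausibly be proved by induction on the discrete state --- and then pass to the diffusion limit. The smoothness of $V$ required for It\^o, and careful treatment near the absorbing boundary and the decided set, is a further technical wrinkle, addressable by parabolic regularity on the interior together with a localisation argument.
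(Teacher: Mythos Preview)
Your verification framework --- define $V(x,t)=\Pr_x(\tauopt>t)$, show $V(X^\straT(s),T-s)$ is a submartingale for every $\straT$ via an HJB inequality, conclude by evaluating at $s=T$ --- is exactly the skeleton the paper uses. The divergence is in how the two hard ingredients (regularity of $V$ and the pointwise inequality $L_{m(x)}V\le L_iV$) are obtained, and here your plan has a genuine gap.

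The paper explicitly tries your route and reports that it fails: one would like $V$ to be $C^{2,1}$ so that It\^o applies, but the relevant parabolic equation has source term $f_i(x,t)=\Pr_x(\tauopt\in dt,\,T_i=0)/dt$, and the authors can show this density exists but \emph{not} that it is H\"older continuous, so classical parabolic regularity does not yield a smooth solution. Your appeal to ``parabolic regularity on the interior together with a localisation argument'' is therefore precisely the step that does not go through. The paper's workaround is to pass to the Laplace transform $\hat v_r(x)=\E_x(e^{-r\tauopt})$, for which they can \emph{explicitly construct} a $C^2$ function satisfying the remarkable identity
\[
(\Generator^i-r)\hat v_r(x)=-r\,\E_x\!\left(e^{-r\tauopt}\Indi{T_i=0}\right),
\]
the right-hand side being manifestly $\le 0$. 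This is done by writing $\hat v_r$ in terms of the eigenfunctions $h^\pm$ of the one-dimensional generator and solving ODEs with smooth-pasting conditions --- a direct computation, not a discrete approximation. They then invert the Laplace transform to obtain a probabilistic (Feynman--Kac type) representation of $V$ itself, from which the submartingale inequality is proved directly for piecewise-constant $\epsilon$-strategies via the Markov property, with no It\^o formula applied to $V$; general strategies are handled by approximation.

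Your alternative plan for the HJB inequality --- discretise to random walks and prove the inequality by induction --- is not pursued in the paper and is at best speculative; note that the discrete recursive-majority problem that motivated the paper is described there as open, so there is no off-the-shelf combinatorial argument to borrow. In short: the outline is right, but both of your proposed mechanisms for the crucial steps (smoothness via parabolic theory, HJB via discrete limit) are replaced in the paper by an explicit Laplace-domain construction that you are missing.
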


The result fits with the existing literature on optimal dynamic
resource allocation
(see Section~\ref{ss:banditsetc} below) and we find it interesting in its own right.
However, our original motivation for introducing the model came from
the so-called
``recursive ternary majority'' problem, which can be described as follows.
Take the complete ternary tree on $n$ levels, place independent Bernoulli($1/2$)
variables on each of the $3^n$ leaves and define internal
nodes to take the majority value of their three children.
We must find the value of the root node by sequentially revealing
leaves, one after the other, paying \textsterling1 for each leaf revealed.
The quantity of concern is the expected cost $r_n$ of
the optimal strategy. Surprisingly, this number is not known for $n > 3$
and there seems little prospect of finding it. Interest has rather
focused on the asymptotic behavior of $r_n$, as this has more relevance in
complexity theory. In particular, the limit
\[
\gamma\eqdef\lim_{n\to\infty} r_n^{1/n},
\]
which exists by a sub-additivity argument, has attracted the attention
of several researchers recently. The best nontrivial bounds
are $9/4 \leq\gamma\leq2.471$ (the lower bound follows from
arguments in Section~{3} of \cite{peres2007random},
the upper bound from numerics).

Our idea was to find a better lower bound for
$\gamma$ by considering a continuous approximation
to the large $n$ tree. It was this continuous approximation that
inspired the diffusive model
introduced in this paper. However, we caution that the results we present
here do not shed light on the value of $\gamma$.

\subsection{Dynamic resource allocation}\label{ss:banditsetc}
Our problem concerns optimal dynamic resource allocation
in continuous time. The most widely studied example of this is
the continuous multi-armed bandit problem (see, e.g., El Karoui
and Karatzas \cite{elkaroui1994dap}, Mandelbaum and Kaspi \cite
{kaspi98multi-armed}). Here, a gambler chooses
the rates at which he will pull the arms on different slot machines.
Each slot machine rewards the gambler at rates which follow a stochastic
process independent of the reward processes for the other machines.
These general bandit problems find application in several fields
where agents must choose between exploration and exploitation,
typified in economics 
and clinical trials.
An optimal strategy is easy to describe. Associated to each machine
is a process known as the Gittins index, which may be interpreted as
the equitable surrender value. It is a celebrated theorem that at each instant,
we should play whichever machine currently has the largest Gittins
index. This
is in direct analogy to the discrete time result of Gittins and Jones
\cite{Git74}.

There is no optimal strategy of index type for our problem.
This reflects the fact that the reward processes
associated to running each of the diffusions are not independent---once two
of the diffusions are absorbed, it may be pointless to run the third.

In \cite{mandelbaum1990osb}, a different dynamic allocation problem is
considered. It has a similar
flavor in that one must choose the rates at which to run
two Brownian motions on $[0,1]$, and we stop once \textit{one} of the
processes hits an endpoint.
The rates are chosen to maximize a terminal payoff, as specified by a function
defined on the boundary of the square (the generalization of this
problem to several
Brownian motions is considered in \cite{vanderbei92}). An optimal
strategy is determined
by a partition of the square into regions of indifference, preference
for the
first Brownian motion and preference for the second. However, there is
no notion of a reward (cost) being accrued as in our problem.

So, our problem, in which
time is costly and there is a terminal cost of infinity
for stopping on a part of $\partial\SS$ which does not determine a
majority decision,
could be seen as lying between continuous bandits and the Brownian
switching in \cite{mandelbaum1990osb}.
Furthermore, although we adopt the framework of the aforementioned
problems, our proof has a different mathematical anatomy.

\subsection{Overview of paper}
The rest of the paper is laid out as follows. Section \ref{s:probstatement}
contains a precise statement of the problem and our assumptions and a
clarification of
Theorem \ref{t:stochminimality}. The proof of this theorem begins in
Section~\ref{sec2},
where we show that the Laplace transform of the distribution
of the decision time $\tauopt$
solves certain differential equations. This fact is then
used in Section~\ref{sec3} to show that the tail of $\tau^{\straTopt}$
solves, in a certain sense, the appropriate Hamilton--Jacobi--Bellman equation.
From here, martingale optimality arguments complete the proof.
Section~\ref{sec4} shows the existence and uniqueness of the strategy $\straTopt$
and in Section~\ref{sec5} we explain the connection between the controlled process
and doubly perturbed diffusions. In the final section, we make a conjecture
about an extension to the model.

\subsection{Problem statement and solution}\label{s:probstatement}

We are given a complete probability space $(\Omega, \Filt, \Pr)$
supporting three
independent It\^{o} diffusions $(X_i(t),  t \geq0)$, $i \in\V= \{
1,2,3\}$,
each of which is started in the unit interval $[0,1]$ and absorbed
at the endpoints. The diffusions all satisfy the same stochastic
differential equation
%
\begin{equation}\label{e:itosde}
dX_i(t) = \sigma(X_i(t))\,dB_i(t) + \mu(X_i(t))\,dt,  \qquad   t \geq0,
\end{equation}
where $\sigma\dvtx [0,1]\to(0,\infty)$ is continuous, $\mu\dvtx [0,1]\to\R$
is Borel and
$(B_i(t),  t \geq0)$, $i \in\V$, are independent Brownian motions.

We denote by $\SS$ the unit cube $[0,1]^3$, by $\R_+$ the set of
nonnegative real numbers $[0,\infty)$ and $\preceq$ its usual partial
order on $\R^3_+$.
It is assumed that we have a standard Markovian setup, that is, there
is a
family of probability measures $(\Pr_x,   x \in\SS)$ under which
$X(0) = x$ almost surely and the filtration $\Filt_i =  ( \Filt
_i(t),  t \geq0 )$
generated by $X_i$ is augmented to satisfy the usual conditions.

From here, we adopt the framework for continuous dynamic allocation
models proposed by Mandelbaum in \cite{mandelbaum1987cma}. This approach
relies on the theory of multiparameter time changes; the reader may consult
\hyperref[s:intromultiparam]{Appendix}   for a short summary of this.

For $\eta\in\R_+^3$, we define the $\sigma$-algebra
\[
\FiltMP(\eta) \eqdef\sigma(\Filt_1(\eta_1), \Filt_2(\eta_2),
\Filt_3(\eta_3)),
\]
which corresponds to the information revealed by running
$X_i$ for $\eta_i$ units of time. The family $(\FiltMP(\eta),  \eta
\in\R_+^3)$ is called
a \textit{multiparameter filtration} and satisfies the ``usual conditions''
of right continuity, completeness and property (F4) of Cairoli and
Walsh \cite{cairoli75}.
It is in terms of this filtration that we define the sense
in which our strategies must be adapted.

A \textit{strategy} is an $\R_+^3$-valued stochastic process
\[
\straT=  \bigl(\straT_1(t), \straT_2(t), \straT_3(t);  t \geq
0 \bigr)
\]
such that:
\begin{longlist}[(C3)]
\item[(C1)] for $i = 1,2,3$, $\straT_i(0) = 0$ and $\straT_i(\cdot
)$ is nondecreasing,
\item[(C2)] for every $t \geq0$, $\straT_1(t) + \straT_2(t) +
\straT_3(t) = t$ and
\item[(C3)] $\straT(t)$ is a stopping ``point'' of the multiparameter
filtration $(\FiltMP(\eta),  \eta\in\R_+^3)$, that is,
\[
\{\straT(t) \preceq\eta\} \in\FiltMP(\eta)  \qquad  \mbox
{for every }  \eta\in\R_+^3.
\]
\end{longlist}

\begin{remark}
In the language of multiparameter processes, $\straT$ is an
\textit{optional increasing path} after Walsh \cite{walsh1981oip}.
\end{remark}

\begin{remark}
Conditions (C1) and (C2) together imply that for any $s \leq t$,
$|\straT_i(t) - \straT_i(s)| \leq t - s$. It follows that the
measure $dC_i$ is absolutely continuous and so it makes sense
to talk about the \textit{rate} $\dot\straT_i(t) = d\straT_i(t)/dt$,
$t \geq0$,
at which $X_i$ is to be run.
\end{remark}

The interpretation is that $\straT_i(t)$ models the total amount of
time spent running $X_i$ by calendar time $t$, and accordingly, the
\textit{controlled process} $X^\straT$ is defined
by
\[
X^\straT(t) \eqdef(X_1(\straT_1(t)), X_2(\straT_2(t)), X_3(\straT
_3(t))),  \qquad   t \geq0.
\]

Continuity of $\straT$ implies that $X^\straT$ is a continuous
process in $\SS$. It is adapted to the
(one parameter) filtration $\FiltMP^\straT$ defined by
\[
\Filt^\straT(t) \eqdef \bigl\{ F \in\Filt\dvtx  F \cap\{ \straT(t)
\preceq\eta\} \in\Filt(\eta) \mbox{ for every } \eta\in\R^3_+
 \bigr\},  \qquad  t \geq0,
\]
which satisfies the usual conditions. 

The \textit{decision time} $\tau^\straT$ for a time allocation
strategy $\straT$ is
the first time that $X^\straT$ hits the \textit{decision set}
\[
\SSd \eqdef \bigl\{(x_1,x_2,x_3) \in\SS\dvtx  x_{i} = x_{j} \in\{0,1\}
 \mbox{ for some } 1 \leq i < j \leq3 \bigr\}.
\]

The objective is to find a strategy whose associated decision time is a
stochastic minimum.
Clearly, it is possible to do very badly by only
ever running one of the processes as a decision may never be reached
(these strategies
do not need to be ruled out in our model).
A more sensible thing to do is to pick two of the processes, and run
them until
they are absorbed. Only if they disagree do we run the third. This
strategy is much better
than the pathological one (the decision time is almost surely finite!)
but we can do better.

We do not think it is obvious what the best strategy is.
In the situation that $X_1(0)$ is close to zero and $X_3(0)$ close to
one, it is probable that $X_1$ and $X_3$ will be absorbed at different
end points of $[0,1]$.
So, if $X_2(0)$ is close to $0.5$ say, it seems likely that $X_2$ will be
pivotal and so we initially run it, even though $X_1$ and $X_3$ might be
absorbed much more quickly. Our guess is to run the diffusion
whose value lies between that of the other two processes. But if all the
processes are near one, it is not at all clear that this is the best
thing to do.
For example, one could be tempted to run the process with largest value
in the hope that it will give a decision very quickly.

It turns out that we must always ``run the middle.'' That is, if, at
any moment $t \geq0$, we have
$X^\straT_1(t) < X_2^\straT(t) < X^\straT_3(t)$, then we should run
$X_2$ exclusively until it hits $X^\straT_1(t)$ or $X^\straT_3(t)$.
We need not concern ourselves with what happens when the processes are
equal. This is because there is, almost surely, only one strategy that
runs the middle of the three
diffusions when they are separated.
To state this result, let us say that for a strategy $\straT$,
component $\straT_i$ increases at time
$t \geq0$ if $\straT_i(u) > \straT_i(t)$ for every $u > t$.

\begin{lemma}\label{l:stratexists}
There exists a time allocation strategy $\straTopt$
with the property that
(RTM) for each $i \in V$, $\straT^\star_i$ increases at
time $t \geq0$
only if
\[
X_j^{\straTopt}(t) \leq X_i^{\straTopt}(t) \leq X_k^{\straTopt}(t)
\]
for some choice $\{j,k\} = V-\{i\}$.

If $\straT$ is any other strategy with this property, then
$\straT(t) = \straTopt(t)$ for all $t \geq0$ almost surely (with respect
to any of the measures $\Pr_x$).
\end{lemma}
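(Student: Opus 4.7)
The plan is to prove existence and uniqueness of $\straTopt$ separately. Uniqueness reduces to showing that the set of ``coincidence'' times (where two controlled coordinates share a value) has Lebesgue measure zero almost surely, so that the middle condition pins the strategy down Lebesgue-a.e.; existence requires an inductive construction on the excursions between coincidences, completed by a sticky prescription across each coincidence.

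For uniqueness, let $\straT$ be any strategy satisfying the stated property. On the random open set $O = \{t \ge 0 : X_1^\straT(t), X_2^\straT(t), X_3^\straT(t)\text{ are pairwise distinct}\}$ there is a unique middle index $i^\star(t)$, and the condition on $\straT$ together with $\dot\straT_1+\dot\straT_2+\dot\straT_3 = 1$ forces $\dot\straT_{i^\star(t)}(t) = 1$, the other two rates vanishing; thus $\straT$ agrees with $\straTopt$ on $O$. I would then show that $O^c$ has Lebesgue measure zero almost surely. Suppose otherwise: on a positive-measure subset of times two controlled coordinates, say $X_i^\straT$ and $X_j^\straT$, coincide. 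The unique $\ell$ with $\dot\straT_\ell(t)=1$ cannot lie in $\{i,j\}$, since regularity of the diffusion prevents $X_\ell^\straT$ from staying at a fixed, non-absorbed value on a set of positive Lebesgue measure; nor can $\ell$ be the third index $k$, since $X_k^\straT \ne X_i^\straT = X_j^\straT$ would then fail to make $X_k^\straT$ a middle. This contradicts $\sum_i \dot\straT_i = 1$, so $O^c$ is null, $\dot\straT = \dot\straTopt$ Lebesgue-a.e., and continuity upgrades the equality to every $t$.

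For existence, I would construct $\straTopt$ inductively between successive coincidences. Starting from a state with pairwise distinct coordinates, let $\pi$ be the permutation of $V$ sorting the initial values in increasing order, and set $\dot\straTopt(t) = e_{\pi(2)}$ on $[0,\sigma_1)$, where $\sigma_1$ is the first time that $X_{\pi(2)}$, run at unit rate, reaches the value of $X_{\pi(1)}(0)$ or $X_{\pi(3)}(0)$ or enters $\{0,1\}$; regularity gives $\sigma_1 > 0$ almost surely. At $\sigma_1$ two components of $X^\straTopt$ coincide, and one re-orders to begin the next excursion.

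The principal obstacle is the passage across such a coincidence. The naive prescription ``choose one of the two tied processes to be the new middle'' fails, since the Brownian oscillations of that candidate around the common value force infinitely many switches in every open interval after $\sigma_1$. The correct prescription is sticky: while two components of $X^\straTopt$ share a common value, $\straTopt$ must split the unit rate equally between the two tied indices, so that the pair of coincident controlled coordinates evolves jointly as a doubly perturbed diffusion---the object made explicit in Section 5. I would realise this either by invoking the existence theory for doubly perturbed diffusions directly, or by approximation: define strategies $\straToptepsilon$ which hold the active index fixed until the running coordinate exits an $\varepsilon$-tube about the current coincidence level, and pass to the limit $\varepsilon \downarrow 0$. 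Iterating across subsequent coincidences and absorptions defines $\straTopt$ up to the decision time---almost surely finite, for instance by comparison with the equal-rate strategy $\straT = (t/3, t/3, t/3)$---and one extends $\straTopt$ beyond by a canonical tie-breaking rule among the absorbed coordinates (needed because the middle condition alone leaves some post-decision freedom).
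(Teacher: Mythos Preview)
Your uniqueness argument has a structural gap. You show that on the open set $O$ where the three controlled coordinates are distinct, the rates of any (RTM) strategy $\straT$ are pinned down by the controlled process $X^\straT$. But $X^\straT$ itself depends on $\straT$, so this does not by itself show that two different (RTM) strategies agree: each is determined a.e.\ by \emph{its own} controlled process. If you recast uniqueness as ``$\tau=\inf\{t:\straT^1(t)\neq\straT^2(t)\}=\infty$'' the real difficulty surfaces: when $X^{\straT^1}(\tau)=X^{\straT^2}(\tau)$ has two (or three) equal components, your argument gives no information about what happens immediately after $\tau$. In addition, the step ``the unique $\ell$ with $\dot\straT_\ell(t)=1$'' is unjustified, since the rates need not be $0/1$; and even after repairing that, showing that two independent time-changed diffusions cannot coincide on a set of positive Lebesgue measure when \emph{both} clocks are running is not the one-line regularity fact you invoke.

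The existence construction also breaks at the coincidence step. Splitting the unit rate equally between two tied indices does \emph{not} keep them tied: each evolves as an independent half-speed diffusion and they separate instantly, so nothing ``sticky'' happens and you are thrown straight back to the problem of infinitely many switches in every neighbourhood of a coincidence. Nor does equal splitting produce the doubly perturbed diffusion of Section~5; there it is the \emph{middle order statistic} $M_t$ that satisfies the perturbed equation, and the perturbation comes from the running extrema $I_t,S_t$, not from any rate-sharing at ties. The paper avoids both difficulties by reducing the two-tied situation to Mandelbaum's one-sided ``follow the minimum'' strategy (Lemma~\ref{l:mandelbaumstrat}), whose existence and almost-sure uniqueness are already established and which correctly handles the infinitely-many-switches structure; the three-equal initial case is then dealt with by identifying, for each small $\epsilon>0$, a stopping point $\nu_\epsilon$ at which the configuration has become two-tied, and showing $\nu_\epsilon\downarrow 0$.
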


This lemma is proved in Section \ref{s:stratexists} and Theorem \ref
{t:stochminimality} states
that $\straTopt$ gives a stochastic minimum for the decision time.

In the sequel, the drift term $\mu$ is assumed to vanish.
This is not a restriction, for if a drift is present we
may eliminate it by rewriting the problem in natural scale.

\section{The Laplace transform of the distribution of $\tauopt
$}\label{s:laplacetransform}\label{sec2}

The proof of Theorem~\ref{t:stochminimality} begins by computing
the Laplace transform
\[
\hat v_r(x) \eqdef\E_x  [\exp(-r \tauopt)  ],
\]
of the distribution of the decision time. 

This nontrivial task is carried out using the ``guess and verify''
method. Loosely, the guess is inspired by comparing the payoffs of doing
something optimal against doing something nearly optimal. This leads to
a surprisingly tractable heuristic equation from which $\hat v_r$ can
be recovered.

The argument which motivates the heuristic proceeds as follows.
From any strategy $\straT$ it is possible to construct (but we omit
the details)
another strategy, $\hat\straT$, that begins by running $X_1$ for some
small time $h > 0$
[i.e., $\hat\straT(t) = (t,0,0)$ for $0 \leq t \leq h$] and then does
not run
$X_1$ again until $\straT_1$ exceeds~$h$, if ever.
In the meantime, $\hat\straT_2$ and $\hat\straT_3$ essentially
follow $\straT_2$ and
$\straT_3$ with the effect that once $\straT_1$ exceeds~$h$, $\straT$
and $\hat\straT$ coincide.

This means that if the amount of time, $\straT_1(\tau^\straT)$, that
$\straT$ spends\vspace*{-1pt}
running $X_1$ is at least $h$, then $\tau^{\hat\straT}$ and $\tau
^{\straT}$ are identical.
On the other hand, if $\straT_1(\tau^\straT) < h$, then $\hat\straT
$ runs $X_1$
for longer than $\straT$, with some of the time $\hat\straT$ spends
running $X_1$ being wasted. In fact, outside a set with probability
$o(h)$ we have
%
\begin{equation}\label{e:tauhat}
\tau^{\hat\straT} = \tau^\straT+  (h - T_1 )^+,
\end{equation}
where $T_i = \straT_i(\tau^\straT)$ is the amount of time that
$\straT$ spends running
$X_i$ while determining the decision.

We compare $\hat\straT$ with the strategy that runs $X_1$ for time
$h$ and
then behaves \textit{optimally}. If we suppose that $\straTopt$ itself
is optimal
and recall that $\hat v_r$ is the corresponding payoff, this yields the
inequality
%
\begin{equation}\label{e:heurineq}
\E_x  [ \exp( -r \tau^{\hat\straT})  ] \leq\E_x
[ \exp(-rh) \hat v_r(X_1(h),X_2(0), X_3(0))  ].
\end{equation}

Now, we take $\straT= \straTopt$ and use \eqref{e:tauhat} to
see that the left-hand side of \eqref{e:heurineq} 
is equal to
\[
\E_x  \bigl[ \exp\bigl( -r \bigl( \tau^\straTopt+ (h - T_1)^+\bigr)\bigr)  \bigr] + o(h),
\]
which, in turn, may be written as
%
\begin{equation}\label{e:heur1}
\hat v_r(x) + \E_x \bigl[  \bigl(\exp\bigl(-r(\tauopt+ h)\bigr) - \exp
(-r\tauopt)  \bigr) \Indi{T_i = 0}  \bigr] + o(h).
\end{equation}

On the other hand, if we assume $\hat v_r$ is suitably smooth, the
right-hand side of \eqref{e:heurineq} is
%
\begin{equation}\label{e:heur2}
\hat v_r(x) + h  (\Generator^1 - r  )\hat v_r(x) + o(h), \qquad
x_1 \in(0,1),
\end{equation}
where we have introduced the differential operator $\Generator^i$
defined by
\[
\Generator^i f(x) \eqdef\frac{1}{2}\sigma^2(x_i) \,\frac{\partial^2
}{\partial x_i^2} f(x), \qquad   x_i \in(0,1).
\]

After substituting these expressions back into \eqref{e:heurineq} and
noticing that there was nothing special about choosing $X_1$ to be the
process that we
moved first, we see that
%
\begin{equation}\label{e:heurineq1}
 \qquad \E_x \bigl[  \bigl( \exp\bigl(-r(\tauopt+ h)\bigr) - \exp(-r\tauopt)
\bigr) \Indi{T_i = 0} \bigr] \leq h  (\Generator^i - r  )\hat
v_r(x) + o(h)
\end{equation}
for each $x_i \in(0,1)$ and $i \in V$.

Dividing both sides by $h$, and taking the limit $h \to0$ yields the inequality
%
\begin{equation}\label{e:heur3}
 (\Generator^i - r  )\hat v_r(x)
\leq- r \E_x \bigl[\exp(-r\tauopt)\Indi{T_i = 0} \bigr].
\end{equation}

Now, in some simpler, but nevertheless related problems, we can
show that \eqref{e:heur3} is true with an \textit{equality} replacing the
inequality. This prompts us to try to \textit{construct} a function
satisfying \eqref{e:heur3} with equality. Our effort culminates in
the following.

\begin{lemma}\label{l:heurfnexists} There exists a continuous function
$h_r\dvtx  \SS\to\R$ such that:
\begin{itemize}
\item$h_r(x) = 1$ for $x \in D$,
\item the partial derivatives $\frac{\partial^2 \hat h_r}{\partial
x_i \,\partial x_j}$ exist
and are continuous on $\{ x \in\SS\setminus D \dvtx  x_i, x_j \in(0,1)\}
$ (for any $i,j \in V$ not necessarily distinct) and
\item furthermore, for each $i \in\V$ and $x \notin D$ with $x_i \in(0,1)$,
\[
 (\Generator^i - r  )h_r(x)
= - r \hat f^i_r(x),
\]
where $\hat f^i_r(x) \eqdef\E_x [\exp(-r\tauopt)\Indi{T_i =
0} ]$.
\end{itemize}
\end{lemma}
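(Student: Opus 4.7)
My plan is to construct $h_r$ explicitly by taking $h_r = \hat v_r$, the Laplace transform of $\tauopt$, and verifying the three required properties using the combinatorial structure of $\straTopt$ described in Lemma \ref{l:stratexists}. The key observation is that $\SSi \setminus D$ decomposes into six open regions $\mathcal{R}_{jik} = \{x \in \SSi : x_j < x_i < x_k\}$, indexed by the strict orderings of the coordinates, glued along codimension-$1$ coalescence faces where two coordinates agree. On $\mathcal{R}_{jik}$ the strategy $\straTopt$ runs only the ``middle'' diffusion $X_i$ while freezing $X_j$ and $X_k$, so by the strong Markov property applied at the first time $X_i$ leaves the interval $(x_j, x_k)$, the restriction of $\hat v_r$ to the segment $\{(x_j, \cdot, x_k)\}$ is $r$-harmonic for $\Generator^i$. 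Writing $\phi_r, \psi_r$ for the two fundamental $r$-harmonic functions of the scalar diffusion on $[0,1]$, this gives
\[
h_r(x) = A(x_j, x_k)\, \phi_r(x_i) + B(x_j, x_k)\, \psi_r(x_i), \quad x \in \mathcal{R}_{jik},
\]
where $A, B$ are determined by the values of $h_r$ on the adjacent coalescence faces. On each such face two coordinates coincide, so $\straTopt$ again runs a single diffusion, and an analogous one-dimensional $r$-harmonic decomposition applies; iterating this procedure until one hits $D$ yields a fully explicit formula for $h_r$ as a finite combination of products of $\phi_r, \psi_r$ evaluated at the relevant coordinates, with coefficients pinned down by the boundary condition $h_r \equiv 1$ on $D$.

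Continuity across the coalescence faces is built into this construction, since the two regions meeting at a face are given the same boundary values by definition. The required mixed smoothness of $\partial^2 h_r / \partial x_i \partial x_j$ on $\{x_i, x_j \in (0,1)\} \cap (\SS \setminus D)$ then follows from the smoothness of $\phi_r, \psi_r$ together with the fact that the coefficients $A, B$ are themselves $C^2$ in each argument, being assembled recursively from the same $r$-harmonic ingredients.

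The PDE $(\Generator^i - r) h_r = -r \hat f^i_r$ is verified by case analysis according to whether $i$ is the middle index in the ordering region containing $x$. If $i$ is the middle, then $\{T_i = 0\}$ is impossible (because $X_i$ starts running immediately), so $\hat f^i_r \equiv 0$ on $\mathcal{R}_{jik}$ and the equation collapses to $(\Generator^i - r) h_r = 0$, which is exactly the defining ODE. If $i$ is not the middle, the diffusion $X_i$ is frozen throughout the initial phase on $\mathcal{R}_{jik}$; conditioning on the first coalescence event via the strong Markov property expresses both $h_r$ and $\hat f^i_r$ in terms of the same $\phi_r, \psi_r$ and first-passage Laplace transforms of the underlying diffusion, and the identity is checked by direct differentiation, using the Wronskian relation $\phi_r \psi_r' - \phi_r' \psi_r = \text{const}$.

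The main obstacle is precisely this last verification when $i$ is not the middle. Since the coefficients $A(x_j, x_k), B(x_j, x_k)$ depend on the extreme coordinates through nested compositions of $\phi_r$ and $\psi_r$ (coming from the recursion on faces), differentiating twice produces several terms which must combine with the lower-order contributions to reproduce exactly $-r \hat f^i_r(x)$. This reduces to a bookkeeping exercise across the possible first-coalescence events of $\straTopt$, and the matching works because the one-dimensional $r$-harmonic decompositions on adjacent regions are compatible on their shared face — essentially the same structural fact that guaranteed well-definedness of $h_r$ in the first place.
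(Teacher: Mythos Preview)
Your approach has two substantial gaps.

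First, the assertion that on a coalescence face ``$\straTopt$ again runs a single diffusion, and an analogous one-dimensional $r$-harmonic decomposition applies'' is false. When, say, $x_1 = x_2 < x_3$, both $X_1$ and $X_2$ qualify as middle components, and $\straTopt$ switches between them so as to follow their running minimum; this is exactly Mandelbaum's two-process construction (Lemma~\ref{l:mandelbaumstrat}) invoked in the proof of Lemma~\ref{l:stratexists}. The dynamics on the face are genuinely two-dimensional, so your recursion does not close and the promised ``fully explicit formula as a finite combination of products of $\phi_r,\psi_r$'' cannot be reached by iterating one-dimensional $r$-harmonic decompositions.

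Second, even granting a correct chamber-by-chamber formula, the smoothness across faces is not justified. You argue that the required regularity ``follows from the smoothness of $\phi_r,\psi_r$ together with the fact that the coefficients $A,B$ are themselves $C^2$'', but across a face such as $\{x_1=x_2\}$ the \emph{roles} of the variables swap: on one side $x_1$ enters through the coefficients and $x_2$ through $\phi_r,\psi_r$, on the other side the reverse. Matching the one-sided derivatives $\partial h_r/\partial x_1$ is therefore a nontrivial smooth-pasting condition, not an automatic consequence of continuity, and your proposal does not address it. The paper proceeds in the opposite order: it first factorises $\hat f^i_r$ explicitly as a product of one-dimensional hitting-time transforms $h^{\pm}_{a,b}$, writes down an ansatz $h(x)=\lambda^-(x_1,x_3)h^-(x_2)+\lambda^+(x_1,x_3)h^+(x_2)$, and then \emph{imposes} both the PDEs $(\Generator^i-r)h=-r\hat f^i_r$ and the smooth-pasting conditions \eqref{e:smoothpasting} as constraints, solving the resulting ODEs for $\lambda^{\pm}$ by variation of parameters. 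The smooth pasting is precisely what fixes the remaining unknown functions and constants; only afterwards, in Lemma~\ref{l:vhatverification}, is the constructed $h$ identified with $\hat v_r$. Your plan to start from $\hat v_r$ and read off these properties reverses this logic without supplying the missing verifications.
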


\begin{pf}
We begin by factorizing $\hat f_r^i(x)$ into a product of Laplace
transforms of
diffusion exit time distributions. This factorization is useful as it
allows us to construct $h$ by solving a series of ordinary
differential equations. Note that in this proof, we will typically
suppress the $r$ dependence for notational convenience.

The diffusions all obey the same stochastic differential equation and so
we lose nothing by assuming that the components of $x$ satisfy
$0 \leq x_1 \leq x_2 \leq x_3 \leq1$. Further, we suppose that $x
\notin D$
because otherwise $T_i = 0$ $\Pr_x$-almost-surely.

In this case, $T_2 > 0$ $\Pr_x$-almost-surely, because for any $t >
0$, there exist times $t_1, t_3 < t/2$
at which $X_1(t_1) < x_1 \leq x_2 \leq x_3 < X_3(t_3)$ and so it is
certain our
strategy allocates time to $X_2$. It follows that $\hat f^2(x)$ vanishes.

Now consider $\hat f^1$. There is a $\Pr_x$-negligible set off which
$T_1 = 0$ occurs if, and only if,
both of the independent diffusions $X_2$ and $X_3$ exit the interval
$(X_1(0),1)$ at the upper boundary. Furthermore, $\tauopt$ is just
the sum of the exit times. That is, if
%
\begin{equation}\label{e:xihita}
\exitST^{(i)}_a \eqdef\inf\{t > 0\dvtx  X_i(t) = a\}, \qquad    a \in[0,1],\ i
\in V,
\end{equation}
then
\[
\hat f^1(x) = \E_x \bigl[\exp \bigl(-r  \bigl(\exitST^{(2)}_1 +
\exitST^{(3)}_1 \bigr) \bigr)
\Indi{ \exitST^{(2)}_1 < \exitST^{(2)}_{x_1}, \exitST^{(3)}_1 <
\exitST^{(3)}_{x_1}} \bigr].
\]

Using independence of $X_2$ and $X_3$, we have the factorization
\[
\hat f^1(x) = \prod_{i=2}^3 \E_x \bigl[\exp \bigl(-r \exitST
^{(i)}_1 \bigr)\Indi{\exitST^{(i)}_1 < \exitST^{(i)}_{x_1}} \bigr].
\]
Note that our assumption $x \notin D$ guarantees that $x_1 < 1$.

To write this more cleanly, let us introduce, for $0 \leq a < b \leq1$,
the functions
\[
h^+_{a,b}(u) \eqdef\E_{u} \bigl[\exp \bigl(-r \exitST^{(1)}_b
\bigr)\Indi{\exitST^{(1)}_b < \exitST^{(1)}_{a}} \bigr],
\]
where the expectation operator $\E_u$ corresponds to the (marginal)
law of
$X_1$ when it begins at $u \in[0,1]$. The diffusions obey the same
SDE, and so
%
\begin{equation}\label{e:fh1}
\hat f^1(x) = h^+_{x_1,1}(x_2)h^+_{x_1,1}(x_3).
\end{equation}
Similarly,
%
\begin{equation}\label{e:fh3}
\hat f^3(x) = h^-_{0,x_3}(x_1)h^-_{0,x_3}(x_2),
\end{equation}
where
\[
h^-_{a,b}(u) \eqdef\E_u \bigl[\exp \bigl(-r \exitST^{(i)}_a
\bigr)\Indi{\exitST^{(i)}_a < \exitST^{(i)}_{b}} \bigr].
\]

We take, as building blocks for the construction of $h$, the functions
$h^\pm_{0,1}$, abbreviated to $h^\pm$ in the sequel.
If $a < b$ and $u \in[a,b]$ then by the strong Markov property,
\[
h^+(u) = h^+_{a,b}(u)h^+(b) + h^-_{a,b}(u)h^+(a)
\]
and
\[
h^-(u) = h^+_{a,b}(u)h^-(b) + h^-_{a,b}(u)h^-(a).
\]
Solving these equations gives
%
\begin{equation}\label{e:hpab}
h^+_{a,b}(u) = \frac{h^-(a)h^+(u) - h^-(u)h^+(a)}{h^-(a)h^+(b)- h^-(b)h^+(a)}
\end{equation}
and
%
\begin{equation}\label{e:hmab}
h^-_{a,b}(u) = \frac{h^-(u)h^+(b) - h^-(b)h^+(u)}{h^-(a)h^+(b)- h^-(b)h^+(a)}.
\end{equation}

The functions $h^+$ and $h^-$ are $C^2$ on $(0,1)$ and continuous on $[0,1]$.
Furthermore, they solve $\Generator f = rf$ where $\Generator f \eqdef
\frac{1}{2}\sigma^2(\cdot) f^{\prime\prime}$.
In light of this, and remembering our assumption that the components of
$x$ are ordered,
we will look for functions $\lambda^+$ and $\lambda^-$ of $x_1$ and
$x_3$ such that
%
\begin{equation}\label{e:hinlambda}
h(x) = \lambda^-(x_1,x_3)h^-(x_2) + \lambda^+(x_1,x_3)h^+(x_2)
\end{equation}
has the desired properties. For other values of $x \notin D$, we will
define $h$ by symmetry.\vadjust{\goodbreak}

To get started, plug \eqref{e:hpab} and \eqref{e:hmab} into \eqref
{e:fh1} and \eqref{e:fh3}
to see that $\hat f^i(x)$ has a linear
dependence on $h^+(x_2)$ and $h^-(x_2)$, that is,  
%
\[
\hat f^i(x) = \psi^i_-(x_1,x_3)h^-(x_2) + \psi^i_+(x_1,x_3)h^+(x_2),
\]
%
where
\begin{eqnarray*}
\psi^1_+(x_1,x_3) &\eqdef&\frac{h^-(x_1)h^+(x_3) - h^-(x_3)h^+(x_1)
}{h^-(x_1)},
\\
\psi^1_-(x_1,x_3) &\eqdef&- \frac{h^+(x_1)}{h^-(x_1)} \psi^1_+(x_1,x_3),
\\
\psi_-^3(x_1,x_3) &\eqdef&\frac{h^-(x_1)h^+(x_3) - h^-(x_3)h^+(x_1)
}{h^+(x_3)},
\end{eqnarray*}
and
\[
\psi_+^3(x_1,x_3) \eqdef-\frac{h^-(x_3)}{h^+(x_3)}\psi_+^3(x_1, x_3).
\]

Linearity of the operator $ ( \Generator^i - r  )$ and
linear independence of
$h^-$ and $h^+$ then show the requirement that $ (\Generator^i -
r )h = -r\hat f^i$
boils down to requiring
\[
 ( \Generator^i - r )\lambda_\pm= -r\psi^i_\pm.
\]

Of course, the corresponding homogeneous equations are solved with
linear combinations of $h^+$ and $h^-$---what remains is the essentially
computational task of finding particular integrals and some constants.

This endeavour begins with repeated application of Lagrange's variation
of parameters
method, determining constants using the boundary conditions $h(x) = 1$
for $x \in D$ where possible. Eventually, we are left wanting
only for real constants, an unknown function of $x_1$ and a function of
$x_3$. At this point, we appeal
to the ``smooth pasting'' conditions
%
\begin{equation}\label{e:smoothpasting}
  \biggl( \frac{\partial}{\partial x_i} -\, \frac{\partial
}{\partial x_j}  \biggr)h\bigg |_{x_i = x_j} = 0,  \qquad   i,j \in V.
\end{equation}

After some manipulation, we are furnished with differential equations
for our unknown functions
and equations for the constants. These we solve with little difficulty
and, in doing so,
determine that
\begin{eqnarray*}
\lambda_-(x_1,x_3) &=& h^-(x_1) - h^+(x_1)h^+(x_3)\int_{x_3}^1 \frac
{\fracc{d}{d u} h^-(u)}{h^+(u)^2}\,du \\
& &{} + h^-(x_1) h^+(x_3) \int_{0}^{x_1} \frac{\fracc{d}{d u}
h^+(u)}{h^-(u)^2}\,du \\
& &{} + \frac{2r h^-(x_3) }{\phi} \int_0^{x_1} \biggl( \frac
{h^+(u)}{\sigma(u)h^-(u)}  \biggr)^2 \\
&&\hphantom{{}+ \frac{2r h^-(x_3) }{\phi} \int_0^{x_1}}
{}\times \bigl( h^-(x_1)h^+(u)-
h^-(u)h^+(x_1)  \bigr)\,du,
\end{eqnarray*}
and
\begin{eqnarray*}
\lambda_+(x_1,x_3) &= & h^+(x_3) + h^-(x_1) h^-(x_3) \int_{0}^{x_1}
\frac{\fracc{d}{d u} h^+(u)}{h^-(u)^2}\,du \\
& &{} - h^-(x_1)h^+(x_3)\int_{x_3}^1 \frac{\fracc{d}{d u}
h^-(u)}{h^+(u)^2}\,du \\
&&{} + \frac{2r h^+(x_1) }{\phi} \int_{x_3}^{1} \biggl( \frac
{h^-(u)}{\sigma(u)h^+(u)}  \biggr)^2\\
&&\hphantom{{}+ \frac{2r h^+(x_1) }{\phi} \int_{x_3}^{1}}
{}\times  \bigl( h^-(u)h^+(x_3) -
h^-(x_3)h^+(u)  \bigr)\,du,
\end{eqnarray*}
where $\phi$ denotes the constant value of the Wronskian $h^-(u)\frac
{d}{d u} h^+(u) -  h^+( u)\times\break\frac{d}{d u} h^-(u)$.

These expressions for $\lambda^\pm$ are valid for any $x$ not lying
in $\SSd$ with weakly ordered components; so $h$ is defined outside of
$D$ via \eqref{e:hinlambda}. Naturally,
we define $h$ to be equal to one on $D$.

Having defined $h$, we now show that it is continuous and has the
required partial derivatives.
Continuity is inherited from $h^+$ and $h^-$ on the whole of $\SS$
apart from at
the exceptional corner points $(0,0,0)$ and $(1,1,1)$ in $D$. For these
two points, a~few lines of
justification are needed. We shall demonstrate continuity at the
origin, continuity at the upper right-hand
corner $(1,1,1)$ follows by the same argument. Let $x^n$ be a sequence
of points in $\SS$ that converge
to $(0,0,0)$; we must show $h(x^n) \to h(0,0,0) = 1$. Without loss
of generality, assume that
the components of $x^n$ are ordered $x^n_1 \leq x^n_2 \leq x^n_3$ and that
$x^n$ is not in $D$ [if $x^n \in D$, then $h(x^n) = 1$ and it may be
discarded from the sequence].
From the expression \eqref{e:hinlambda} for $h$, we see that it is
sufficient to check that
\[
\mbox{(i)} \quad    \lambda^-(x^n_1, x^n_3) \to1
 \quad \mbox{and}  \quad   \mbox{(ii)} \quad
h^+(x^n_2)\lambda^+(x^n_1, x^n_3) \to0,
\]
since $h^-(x^n_2) \to1$. For (i), the only doubt is that the term
involving the first integral in the expression
for $\lambda^-$ does not vanish in the limit. The fact that it
does can be proved by the Dominated Convergence theorem. The term is
\[
h^+(x^n_1)h^+(x^n_3) \int_{x^n_3}^1 \frac{\fracc{\partial}{\partial
u} h^-(u)}{h^+(u)^2}\,du
= \int_{0}^1 \Indi{u > x^n_3} \frac{h^+(x^n_1)h^+(x^n_3)}{h^+(u)^2}
\frac{\partial}{\partial u} h^-(u)\,du.
\]
The ratio $\frac{h^+(x^n_1)h^+(x^n_3)}{h^+(u)^2}$ is bounded above by
one when $u > x^n_3 \geq x^n_1$
since $h^+$ is increasing. Further, the derivative of $h^-$ is
integrable and so
the integrand is dominated by an integrable function, and converges to zero.

For the second limit (ii), there are two terms to check. First, that
\[
h^+(x^n_2)h^-(x^n_1)h^+(x^n_3)\int_{x^n_3}^1 \frac{\fracc{\partial
}{\partial u} h^-(u)}{h^+(u)^2}\,du \to0
\]
follows from essentially the same argument as before. The second term
of concern is
\[
h^+(x^n_1)\int_{x^n_3}^{1} \biggl( \frac{h^-(u)}{\sigma(u)h^+(u)}
 \biggr)^2  \bigl( h^-(u)h^+(x^n_3) - h^-(x^n_3)h^+(u)  \bigr)\,du.
\]

Again, one may write this as the integral of a dominated function (recalling
that $\sigma$ is bounded away from zero) that converges to zero. Thus,
the integral above converges to zero as required.

Now that we have established continuity of $h$, we can begin tackling
the partial derivatives.

When the components of $x$ are distinct, differentiability comes from
that of our building blocks $h^+$
and $h^-$. It is at the switching boundaries, when two or more
components are equal,
where we have to be careful. The key here is to remember that we
constructed $h$ to
satisfy the smooth pasting property \eqref{e:smoothpasting}---this
allows us to show that the one-sided partial derivatives are equal at
the switching boundaries. For example, provided the limit exists,
\[
 \frac{\partial}{\partial x_1}h(x_1,x_2,x_3) \bigg|_{x_1 =
x_2=x_3} =
\lim_{\epsilon\to0} \frac{1}{\epsilon}\bigl ( h(x_1 + \epsilon,
x_1,x_1) - h(x_1,x_1,x_1)  \bigr).
\]
Using \eqref{e:hinlambda} and the differentiability of $\lambda$, the
limit from above is
\[
 \frac{\partial}{\partial x_3} \bigl( \lambda
^-(x_1,x_3)h^-(x_2) + \lambda^+(x_1,x_3)h^+(x_2)  \bigr)\bigg |_{x_1
= x_2=x_3}.
\]
This is equal to the limit from below,
\[
 \frac{\partial}{\partial x_1} \bigl( \lambda
^-(x_1,x_3)h^-(x_2) + \lambda^+(x_1,x_3)h^+(x_2)  \bigr)\bigg |_{x_1
= x_2=x_3},
\]
by the smooth pasting property. 
The other first-order partial derivatives exist by similar arguments.
Note that we do not include in our hypothesis the requirement that
these first-order partial
derivatives exist at the boundary points of the interval.

The second-order derivatives are only slightly more laborious to check.
As before it is at switching boundaries where we must take care in
checking that
the limits from above and below agree. For the partial derivatives
$\frac{\partial^2}{\partial x_i^2}h$\vspace*{-2pt} at a point $x$ not in $D$ with
$x_i \in(0,1)$,
we equate the limits using the fact that $(\Generator^i - r)h(x)$
vanishes whenever
$x_i$ is equal to another component of $x$ rather than smooth pasting.
For the mixed partial derivatives, we use a different argument.
When exactly two components are equal, there is no problem. This
is a consequence of the form \eqref{e:hinlambda} of $h$---one
component enters through
the terms $h^+$ and $h^-$ while the other two components 
enter through $\lambda^+$ and $\lambda^-$. For example, if $x_1 = x_2
< x_3$, then 
%
\begin{eqnarray*}
  \frac{\partial^2 }{\partial x_1 \,\partial x_2}
h(x_1,x_2,x_3) \bigg|_{x_1=x_2} &=&
\biggl (\frac{dh^-}{dx_1}(x_1) \biggr)\, \frac{\partial}{\partial
x_1}\lambda^-(x_1,x_3)\\
&&{} +
 \biggl(\frac{dh^+}{dx_1}(x_1) \biggr)\, \frac{\partial}{\partial
x_1}\lambda^+(x_1,x_3)
\end{eqnarray*}
regardless of how the switching boundary is approached. When all three
components are
equal, we must check that
\begin{eqnarray*}
 \frac{\partial^2 h }{\partial x_1\, \partial
x_3}(x_1,x_2,x_3) \bigg|_{x_1 = x_2 = x_3} &=&
 \frac{\partial^2 h }{\partial x_2 \,\partial
x_3}(x_1,x_2,x_3) \bigg|_{x_1 = x_2 = x_3}\\
 &=&
 \frac{\partial^2 h }{\partial x_1\, \partial
x_2}(x_1,x_2,x_3) \bigg|_{x_1 = x_2 = x_3}.
\end{eqnarray*}
This is straightforward to do. Thus, $h$ has all of the properties we required.
\end{pf}

From here, we need a verification lemma to check that the function we
constructed really is equal
to $\hat v_r$. The following result does just that, and, as a
corollary, shows that $\hat v_r$ is maximal among Laplace transforms of
decision time distributions
(note that this is weaker than the stochastic minimality claimed in
Theorem \ref{t:stochminimality}).
The result is essentially that Bellman's principle of optimality holds
(specialists in optimal control will notice that
the function we constructed in Lemma \ref{l:heurfnexists}
satisfies the Hamilton--Jacobi--Bellman equation).

\begin{lemma}\label{l:vhatverification} Suppose that $h_r\dvtx  \SS\to\R
$ satisfies:
\begin{itemize}
\item$h_r$ is continuous on $\SS$,
%
\item for $i,j \in V$, $\frac{\partial^2 h_r}{\partial x_i \,\partial
x_j}$ exists and is continuous on $\{ x \in\SS\setminus\SSd\dvtx  x_i,
x_j \in(0,1)\}$,
\item$h_r(x) = 1$ for $x \in D$,
\item$ (\Generator^i - r )h_r(x) \leq0$.
\end{itemize}
Then
\[
h_r(x) \geq\sup_{\straT} \E_x [\exp(-r \tau^{\straT}) ].
\]

Furthermore, if $ (\Generator^i - r )h_r(x)$ vanishes
whenever $x_j \leq x_i \leq x_k$ (under some labeling) then
\[
h_r(x) = \hat v_r(x) = \E_x [\exp(-r \tauopt) ].
\]
\end{lemma}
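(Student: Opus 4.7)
\medskip
\noindent\textbf{Proof sketch.} The plan is the standard martingale verification argument: I would show that under any strategy $\straT$, the process
\[
M^\straT_t \eqdef \exp(-rt)\, h_r(X^\straT(t)), \quad t \geq 0,
\]
is a bounded supermartingale with respect to $\Filt^\straT$, and that it becomes a martingale under $\straT = \straTopt$ when the equality hypothesis is in force. Optional stopping at $\tau^\straT$ then delivers both conclusions, using $h_r \equiv 1$ on $D$.

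The key computation is Itô's formula applied to $M^\straT$. Because $X^\straT_i(t) = X_i(\straT_i(t))$ is a (continuous) time change of the independent diffusions $X_i$ with absolutely continuous rates $\dot\straT_i$, the cross variations $\langle X^\straT_i, X^\straT_j\rangle$ vanish for $i \neq j$, while $d\langle X^\straT_i\rangle_t = \sigma^2(X^\straT_i(t))\,\dot\straT_i(t)\,dt$ (accruing only while $X_i$ is not yet absorbed). Itô's formula, applied strictly on the open set where all components lie in $(0,1)$ and using the hypothesised continuity of the second mixed partials, gives
\[
dM^\straT_t = \exp(-rt) \sum_{i\in V} \dot\straT_i(t)\,(\Generator^i - r)h_r(X^\straT(t))\,dt \; + \; d(\text{local martingale}),
\]
where the drift coefficient of $h_r$ alone contributes $-r h_r$ which, combined with $\sum_i \dot\straT_i = 1$, rearranges into the convex combination above. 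The assumption $(\Generator^i - r)h_r \leq 0$ together with $\dot\straT_i \geq 0$ makes the bounded variation part non-increasing, so $M^\straT$ is a local supermartingale; continuity of $h_r$ on the compact cube $\SS$ gives boundedness, hence $M^\straT$ is a true supermartingale.

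Optional stopping at $\tau^\straT$ then yields $h_r(x) \geq \E_x[M^\straT_{\tau^\straT}]$. On $\{\tau^\straT < \infty\}$ we have $X^\straT(\tau^\straT) \in D$ and so $h_r(X^\straT(\tau^\straT)) = 1$; on $\{\tau^\straT = \infty\}$ the factor $\exp(-r\tau^\straT)$ vanishes. Either way
\[
h_r(x) \geq \E_x\bigl(\exp(-r\tau^\straT)\bigr),
\]
and taking the supremum over strategies gives the first claim. For the equality assertion, Lemma~\ref{l:stratexists} tells us that $\straTopt$ only increases $\straTopt_i$ at times $t$ when $X^{\straTopt}_j(t) \leq X^{\straTopt}_i(t) \leq X^{\straTopt}_k(t)$ for some labelling; the extra hypothesis makes $(\Generator^i - r)h_r$ vanish precisely at such $x$, so the drift above is identically zero along $\straTopt$. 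Thus $M^{\straTopt}$ is a true martingale, equality holds throughout, and $h_r(x) = \hat v_r(x)$.

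The most delicate step is justifying Itô's formula given the limited regularity of $h_r$: it is only guaranteed $C^2$ in mixed partials on $\SS \setminus D$ with $x_i, x_j \in (0,1)$, and we have no a priori control at the switching boundaries $\{x_i = x_j\}$ nor at the absorbing faces of $\SS$. I would handle this by a localisation: stop $M^\straT$ at the exit time from the open set $\{x \in \SS \setminus D : 0 < x_i < 1 \text{ for all } i\}$ up to a small distance from the boundary, apply Itô there, then let the localisation expand to $\tau^\straT$. The contributions from $X^\straT_i$ being absorbed at an endpoint are harmless because once $X_i$ hits $\{0,1\}$ it is frozen, so $dX^\straT_i \equiv 0$ thereafter regardless of $\dot\straT_i$, and the $\Generator^i$ term drops out of the drift. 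The switching boundaries are passed through only on a time set of Lebesgue measure zero for any reasonable strategy (though a careful argument would use continuity of $h_r$ and the fact that the cross partials extend continuously from each orthant), so they contribute nothing to the integrated drift.
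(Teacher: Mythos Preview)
Your approach is essentially the same as the paper's: apply It\^o's formula to $e^{-rt}h_r(X^\straT(t))$, use $(\Generator^i - r)h_r \le 0$ and $\dot\straT_i \ge 0$ to get a supermartingale, localise near $\partial\SS$, and pass to the limit using continuity and boundedness of $h_r$; for $\straTopt$ the drift vanishes and equality holds. Two small remarks. First, your worry about the switching boundaries $\{x_i=x_j\}$ is misplaced: the hypothesis already asserts that $\partial^2 h_r/\partial x_i\partial x_j$ is continuous on $\{x\in\SS\setminus D: x_i,x_j\in(0,1)\}$, which \emph{includes} these diagonals, so It\^o applies there without any occupation-time argument. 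Second, the paper is slightly more explicit than you are about what happens after $X^\straT$ first hits $\partial\SS$: rather than arguing informally that ``$dX^\straT_i\equiv 0$ once absorbed so the $\Generator^i$ term drops out,'' it restarts the computation on the face (resp.\ edge) with a two- (resp.\ one-) dimensional It\^o formula, since the hypothesis gives no control of $\partial^2_{ii}h_r$ at $x_i\in\{0,1\}$; this produces a telescoping chain of inequalities up to $\tau^\straT$. Your version is morally the same but would need that extra line to be airtight.
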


\begin{pf}
Let $\straT$ be an arbitrary strategy and define\vspace*{-1pt} the function $g\dvtx \SS
\times[0,\infty) \to\R$
by $g(x,t) \eqdef\exp(-rt)h_r(x)$. Then, by hypothesis,\vspace*{-1pt} $g$ is
$C^{2,1}$ on
$(0,1)^3 \times[0,\infty)$.
Thus, if $\operatorname{dist}$ denotes Euclidean distance and $\rho_n \eqdef\inf\{t
\geq0\dvtx\break   \operatorname{dist}(X^\straT(t), \partial\SS) < n^{-1}\}$, It{\^o}'s
formula shows that
\begin{eqnarray*}
g (X^\straT(\rho_n),\rho_n ) - g(X^\straT(0),0) &=& \sum_i \int
_0^{\rho_n} \,\frac{\partial}{\partial x_i}g(X^\straT(s),s)\,dX^\straT
_i(s) \\
&&{} + \int_0^{\rho_n} \,\frac{\partial}{\partial t}g(X^\straT
(s),s)\,ds\\
&&{} + \frac{1}{2} \sum_{i,j} \int_0^{\rho_n} \,\frac{\partial
^2}{\partial x_i\,\partial x_j}g(X^\straT(s),s)\,d[X^\straT_i,X^\straT_j]_s.
\end{eqnarray*}

Theorem \ref{t:MPCSM} implies
$[X^\straT_i]_s = [X_i]_{\straT_i(s)}$ and that
$X^\straT_i$ and $X^\straT_j$ are orthogonal martingales. Hence,
using absolute continuity of $\straT$ and Proposition 1.5, Chapter~V
of \cite{revuzyor},
\begin{eqnarray*}
g(X^\straT(\rho_n),\rho_n) - g(X^\straT(0),0) &=& \sum_i \int
_0^{\rho_n} \,\frac{\partial}{\partial x_i}g(X^\straT(s),s)\,dX^\straT
_i(s) \\
&&{} + \sum_i \int_0^{\rho_n} \exp(-rs)  (\Generator^i - r
 ) h(X^\straT(s)) \dot\straT_i(s)\,ds.
\end{eqnarray*}

The integrand of the stochastic integral against the square integrable
martingale $X^\straT_i$ is continuous and hence bounded on each
compact subset of $(0,1)^3$.
Thus, the integral's expectation vanishes, that is,
\[
\E_x \biggl[ \int_0^{\rho_n} \,\frac{\partial}{\partial
x_i}g(X^\straT(s),s)\,dX^\straT_i(s)  \biggr] = 0.
\]

Next, the fact that $ (\Generator^i -r  ) h$ is not positive
gives
\[
\E_x  \biggl[\int_0^{\rho_n} \exp(-rs)  (\Generator^i -r
 ) h(X^\straT(s)) \dot\straT_i(s)\,ds \biggr] \leq0,
\]
and so
%
\begin{equation}\label{e:lemver1}
\E_x  [\exp(-r\rho_n)h(X^\straT(\rho_n)) ] - h(x) 
\leq0.
\end{equation}

Now, the times $\rho_n$ taken for $X^\straT$ to come within distance $n^{-1}$
of the boundary of $\SS$ converge to $\rho\eqdef\inf\{t \geq0\dvtx
X^\straT(t) \in\partial\SS\}$
as $n \to\infty$. So, the continuity of $h$ and the Dominated
Convergence theorem together imply
%
\begin{equation}\label{e:lemver2}
\E_x  [\exp(-r\rho)h(X^\straT(\rho)) ] \leq h(x).
\end{equation}

In summary, inequality \eqref{e:lemver2} arises by applying
the three dimensional It{\^o} formula
to $g$ composed with the controlled process stopped inside $(0,1)^3$ and
then using continuity of $h$. But, from time $\rho$ onward, our
controlled process
runs on a face or an edge of the cube and It{\^o}'s formula in three dimensions
does not apply. This is not a problem though---a similar
argument with It{\^o}'s formula in one (or two) dimensions does the trick.
That is,
if $\rho^\prime$ denotes the first time that $X^\straT$ hits an edge
of $\SS$
(so $0 \leq\rho\leq\rho^\prime\leq\tau^\straT$),
then both
\begin{equation}\label{e:lemver3}
\E_x  [\exp(-r\rho^\prime)h(X^\straT(\rho^\prime)) - \exp
(-r\rho)h(X^\straT(\rho)) ] \leq0
\end{equation}
and
%
\begin{equation}\label{e:lemver4}
\E_x  [\exp(-r\tau^\straT)h(X^\straT(\tau^\straT)) - \exp
(-r\rho^\prime)h(X^\straT(\rho^\prime)) ] \leq0.
\end{equation}

Summing these differences and using the boundary condition $h(x) = 1$ for
$x \in D$ yields 
\[
\E_x  [\exp(-r\tau^\straT) ] = \E_x  [\exp(-r\tau
^\straT)h(X^\straT(\tau^\straT)) ]
\leq h(x).
\]

Thus, $h$ is an upper bound for the Laplace transform of the
distribution of
the decision time arising from any strategy.
It remains to prove that $h$ is equal to the Laplace transform $\hat
v_r$.

Suppose that $\straT$ is the strategy $\straTopt$ from Lemma \ref
{l:stratexists}, then for almost every \mbox{$s \geq0$}, $\dot\straT_i(s)$
is positive only when $X^\straT_j(s) \leq X^\straT_i(s) \leq X^\straT
_k(s)$ under some labeling.\vspace*{-2pt}
So, $ (\Generator^i -r  ) h(X^\straT(s))\dot\straT_i(s)$
vanishes for almost every $s \geq0$ and \eqref{e:lemver1} is an equality.
Taking limits show that \eqref{e:lemver2}--\eqref{e:lemver4}
are also equalities.
\end{pf}

So, $\hat v_r$ is twice differentiable in each component and satisfies
the heuristic
equation
%
\begin{equation}\label{e:heureq4}
 (\Generator^i - r  )\hat v_r(x)
= - r \hat f^i_r(x),  \qquad   x \notin D, \ x_i \in(0,1).
\end{equation}

In the next section, we will show that $\Pr_x(\tauopt> t)$ is the
probabilistic
solution to certain parabolic partial differential equations. To do
this, we
need to rewrite $\hat v_r$ in a more suitable form. Introduce the notation
$X^{(1)}(t) = (X_1(t),X_2(0),X_3(0))$,
$X^{(2)}(t) = (X_1(0),X_2(2),X_3(0))$ and
$X^{(3)}(t) = (X_1(0),\break X_2(0),X_3(t))$ for each $t \geq0$.
We define $\rho^{(i)}$ to be
the absorption time of
$X_i$, that is,
\[
\rho^{(i)} \eqdef\inf\{t \geq0\dvtx  X_i(t) \notin(0, 1)\}.
\]

\begin{lemma}\label{l:vrhatrep}
For any $x \notin D$, $\hat v_r$ can be written as
\[
\hat v_r(x) = \E_x  \biggl[ \exp\bigl(-r \rho^{(i)}\bigr)
 \hat v_r\bigl(X^{(i)}\bigl(\rho^{(i)}\bigr)\bigr)
+r
\int_0^{\rho^{(i)}} \hat f^i_r\bigl(X^{(i)}(s)\bigr) \exp(-rs)\,ds  \biggr].
\]
\end{lemma}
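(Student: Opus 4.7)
The plan is to obtain this identity as a Feynman--Kac representation driven by the PDE \eqref{e:heureq4} that $\hat v_r$ satisfies. The process $X^{(i)}$ is just $X_i$ run on its own with the other two coordinates frozen, so all of the action takes place in the single-variable differential operator $\Generator^i$, and Itô's formula will be the main tool.

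First I would check that the trajectory stays in the region where \eqref{e:heureq4} is valid. Fix $x \notin D$ and $i \in V$. Since $x \notin D$, no two of $x_j, x_k$ (for $j,k \ne i$) can both equal $0$ or both equal $1$; hence $X^{(i)}(t) \notin D$ whenever $X_i(t) \in (0,1)$, i.e.\ for every $t < \rho^{(i)}$. On this time interval the $i$th coordinate of $X^{(i)}$ lies in $(0,1)$, so the hypotheses guaranteeing the identity $(\Generator^i - r)\hat v_r = -r\hat f^i_r$ are met throughout the evolution.

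Next I would apply Itô's formula to $g(y,t) \eqdef e^{-rt}\hat v_r(y)$ along $X^{(i)}$, after localising. Lemmas \ref{l:heurfnexists}--\ref{l:vhatverification} give $\hat v_r \in C^2$ in $x_i$ wherever $x \notin D$ and $x_i \in (0,1)$, which is exactly what is needed because only the $i$th coordinate of $X^{(i)}$ carries quadratic variation. Set $\rho_n \eqdef \inf\{t \ge 0 : X_i(t) \notin (1/n, 1 - 1/n)\}$, so $\rho_n \uparrow \rho^{(i)}$. Itô's formula gives
\[
e^{-r\rho_n}\hat v_r(X^{(i)}(\rho_n)) - \hat v_r(x) = \int_0^{\rho_n} e^{-rs} \frac{\partial \hat v_r}{\partial x_i}(X^{(i)}(s))\,\sigma(X_i(s))\,dB_i(s) + \int_0^{\rho_n} e^{-rs} (\Generator^i - r)\hat v_r(X^{(i)}(s))\,ds.
\]
Substitute $(\Generator^i - r)\hat v_r = -r\hat f^i_r$ from \eqref{e:heureq4} in the drift term, and take $\E_x$. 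The stochastic integral has expectation zero because its integrand is continuous, hence bounded, on the compact set $\{y : y_i \in [1/n, 1-1/n]\}$. This yields
\[
\hat v_r(x) = \E_x\bigl[e^{-r\rho_n}\hat v_r(X^{(i)}(\rho_n))\bigr] + r\,\E_x \int_0^{\rho_n} e^{-rs}\hat f^i_r(X^{(i)}(s))\,ds.
\]

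Finally I would let $n \to \infty$. Because $\hat v_r$ and $\hat f^i_r$ are Laplace transforms, both are bounded by $1$, and $\hat v_r$ is continuous on all of $\SS$ (from Lemma \ref{l:heurfnexists}, transferred to $\hat v_r$ via Lemma \ref{l:vhatverification}). Dominated convergence then sends the boundary term to $\E_x[e^{-r\rho^{(i)}}\hat v_r(X^{(i)}(\rho^{(i)}))]$ and the integral term to the claimed expression, delivering the identity. The main obstacle is the mild regularity issue at $\rho^{(i)}$: $\hat v_r$ is only guaranteed to be $C^2$ in $x_i$ strictly inside $(0,1)$, which is precisely why the localisation at $\rho_n$ is necessary before passing to the continuous boundary limit. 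Beyond that localisation, the argument is essentially a routine Feynman--Kac computation.
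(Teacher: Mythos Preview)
Your proposal is correct and follows essentially the same approach as the paper's own proof: localise via $\rho_n = \inf\{t\ge 0: X_i(t)\notin(1/n,1-1/n)\}$, apply It\^o's formula to $e^{-rt}\hat v_r(X^{(i)}(t))$, use \eqref{e:heureq4} to rewrite the drift, note the stochastic integral has zero mean by boundedness on compacts, and pass to the limit by dominated convergence and continuity of $\hat v_r$.
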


\begin{pf}
Fix $x \notin D$, then the function $x_i \mapsto\hat v_r(x)$ is $C^2$
on $(0,1)$ and $C^0$ on $[0,1]$.
Introduce the a.s. finite $\Filt_i$ stopping time $\rho^{(i)}_n
\eqdef\inf\{t \geq0\dvtx  X_i(t) \notin(n^{-1}, 1-n^{-1})\}$, so It{\^o}'s
formula (in one dimension) gives
\begin{eqnarray*}
&&\exp\bigl(-r\rho^{(i)}_n\bigr)\hat v_r\bigl(X^{(i)}\bigl(\rho^{(i)}_n\bigr)\bigr)
 - \hat v_r(X(0))\\
&& \qquad  =
\int_0^{\rho^{(i)}_n} \exp(-rs) \,\frac{\partial}{\partial x_i} \hat
v_r\bigl(X^{(i)}(s)\bigr)\,dX_i(s) \\
&& \qquad  \quad {} + \int_0^{\rho^{(i)}_n} \exp(-rs) ( \Generator^i - r
) \hat v_r\bigl(X^{(i)}(s)\bigr)\,ds.
\end{eqnarray*}

The function $\frac{\partial}{\partial x_i} \hat v_r$ is continuous
on $(0,1)$ and hence bounded on the compact
sets $[n^{-1}, 1-n^{-1}]$. It follows that
the expectation of the stochastic integral against $dX_i$
vanishes. 
So, using equation \eqref{e:heureq4},
\begin{eqnarray*}
\hat v_r(x) & = & \E_x \bigl[ \exp\bigl(-r\rho^{(i)}_n\bigr)\hat
v_r\bigl(X^{(i)}\bigl(\rho^{(i)}_n\bigr)\bigr) \bigr]
\\
&& {}+r\E_x \biggl[\int_0^{\rho^{(i)}_n} \exp(-rs)\hat
f^i_r\bigl(X^{(i)}(s)\bigr)\,ds \biggr].
\end{eqnarray*}

The stopping times $\rho^{(i)}_n$ converge to $\rho^{(i)}$ as $n \to
\infty$ and so by continuity of $X_i$,
$\hat v_r$, the exponential function and the integral,
\[
\exp\bigl(-r\rho^{(i)}_n\bigr)\hat v_r\bigl(X^{(i)}\bigl(\rho^{(i)}_n\bigr)\bigr)
 \to\exp\bigl(-r\rho
^{(i)}\bigr)\hat v_r\bigl(X^{(i)}\bigl(\rho^{(i)}\bigr)\bigr)
\]
and
\[
\int_0^{\rho^{(i)}_n} \exp(-rs)\hat f^i_r\bigl(X^{(i)}(s)\bigr)\,ds \to\int
_0^{\rho^{(i)}} \exp(-rs)\hat f^i_r\bigl(X^{(i)}(s)\bigr)\,ds.
\]

To finish the proof, use the Dominated Convergence theorem to exchange
the limit and expectation.
\end{pf}

\begin{remark}\label{r:Etau}
We can generalize our heuristic argument to value functions of the form
%
\[
J(x,t) \eqdef\E_x [g(\tauopt+ t) ], \qquad   x \in\SS,\   t
\geq0,
\]
for differentiable $g$. The heuristic equation reads
%
\begin{equation}\label{e:heurgeneric}
 \biggl(\Generator^i + \,\frac{\partial}{\partial t} \biggr) J(x,t) =
\E_x \bigl[g^\prime(\tauopt+ t)\Indi{T_i = 0} \bigr].
\end{equation}

Equation \eqref{e:heureq4} is the specialization $g(t) = \exp(-rt)$.
Such a choice of $g$ is helpful because it effectively removes the time
dependence in \eqref{e:heurgeneric},
making it easier to solve. The benefit is the same if $g$ is linear
and it is not difficult to construct and verify (as we did in
Lemmas \ref{l:heurfnexists} and \ref{l:vhatverification})
an\vspace*{-1pt} explicit expression for $J(x) \eqdef\E_x [\tauopt ]$.
In terms of the expected absorption times $G(u) = \E_u[\exitST
^{(1)}_{0} \wedge\exitST^{(1)}_{1}]$ and integrals
\[
I_k(x_1) \eqdef\int_0^{x_1} \frac{G(u)}{(1-u)^k}\,du  \quad \mbox{and}
\quad
J_k(x_3) \eqdef\int_{x_3}^1 \frac{G(u)}{u^k}\,du, \qquad   k \in\N,
\]
the expression for $J$ reads
\begin{eqnarray*}
J(x) & = & G(x_2) + (1-x_1)^{-2}G(x_1)\\
&&\hphantom{G(x_2) +}{}\times \bigl(
(1-x_2)\bigl((1-x_1)-(1-x_3)\bigr)+(1-x_1)(1-x_3) \bigr) \\
&&{} -2I_3(x_1) \bigl( (1-x_2)\bigl((1-x_1)+(1-x_3)\bigr)+(1-x_1)(1-x_3)\bigr)
\\
&&{}+ 6I_4(x_1)(1-x_2)(1-x_1)(1-x_3) + x_3^{-2}G(x_3) \bigl(x_2(x_3-x_1) +
x_1 x_3  \bigr) \\
&&{} -2J_3(x_3) \bigl( x_2(x_3+x_1) + x_1 x_3  \bigr) + 6J_4(x_3)x_1 x_2 x_3.
\end{eqnarray*}
\end{remark}

\section{A representation for $\Pr_x (\tauopt> T )$}\label{sec3}

The aim of this section is to connect the tail probability $v\dvtx \SS
\times[0,\infty) \to[0,1]$ defined by
\[
v(x,t) \eqdef\Pr_x (\tauopt> t ),
\]
to the formula for $\hat v_r$ from Lemma \ref{l:vrhatrep}.
Before continuing, let us explain the key idea. Just for a moment,
suppose that $v$ is smooth
and consider the Laplace transform of $ (\Generator^i - \,\frac
{\partial}{\partial t}  )v(x,\cdot)$.
It is straightforward to show that the Laplace transform of $v$
satisfies [see \eqref{e:laplacev}]
\[
\int_0^\infty v(x,t) \exp(-rt)\,dt = r^{-1} \bigl ( 1 - \hat
v_r(x) \bigr).
\]

Bringing $\Generator^i$ through the integral and integrating by parts
in $t$,
\[
\int_0^\infty\exp(-rt) \biggl (\Generator^i - \,\frac{\partial
}{\partial t}  \biggr)v(x,t)\,dt = -r^{-1}  ( \Generator^i -
r ) \hat v_r(x).
\]
Combining this with the heuristic equation \eqref{e:heureq4} gives
%
\begin{equation}\label{e:genddtvxtLT}
\int_0^\infty\exp(-rt)  \biggl(\Generator^i - \,\frac{\partial
}{\partial t}  \biggr)v(x,t)\,dt =
\hat f_r^i(x).
\end{equation}
This shows that $ (\Generator^i - \,\frac{\partial}{\partial t}
 )v$ is nonnegative
(i.e., $v$ satisfies the associated Hamilton--Jacobi--Bellman equation).
From here,
one could use It{\^o}'s formula (cf.  the proof of Lemma \ref
{l:vhatverification}) to see that
$ (v(X^\straT(t), T-t), 0 \leq t \leq T )$ is a
sub-martingale for any strategy $\straT$.
In particular,
\[
\Pr_x(\tau^{\straT}>T) = \E_x [v(X^\straT(T), 0) ] \geq v(x,T).
\]

So, ideally, to prove Theorem \ref{t:stochminimality}, we would
establish that $v$ is
smooth enough to apply It{\^o}'s formula.
We are given some hope, by noticing that if we\vspace*{1pt} can show that $\hat
f_r^i(x)$ is the Laplace transform of a function $f_i(x,t)$ say, then
\eqref{e:genddtvxtLT} implies that $v$ solves
%
\begin{equation}\label{e:genddtvxt}
 \biggl(\Generator^i - \,\frac{\partial}{\partial t}  \biggr)v = f_i.
\end{equation}
We can show such a density $f_i$ exists (Lemma \ref{l:densityexists}
below) but
not that it is H\"{o}lder continuous. Unfortunately, without the latter,
we cannot show that \eqref{e:genddtvxt} has a classical solution. Nevertheless,
we can deduce the sub-martingale inequality by showing merely that $v$ solves
\eqref{e:genddtvxt} in a weaker sense (Lemma \ref{l:rep}).

To commence, let us first verify the claim that $\hat f_r^i$ is the
Laplace transform of a function.

\begin{lemma}\label{l:densityexists} For each $x \notin D$ and $i \in
V$, the Borel measure $B \mapsto\Pr_x(\tauopt\in B,  T_i = 0)$ has a
(defective) density $f_i\dvtx \SS\times[0,\infty) \to[0,\infty)$, that is,
\[
\Pr_x(\tauopt\in dt, T_i = 0) = f_i(x,t)\,dt, \qquad   t \geq0.
\]
\end{lemma}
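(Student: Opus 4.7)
The plan is to exploit the decomposition of $\{T_i = 0\}$ already obtained in the proof of Lemma \ref{l:heurfnexists} and to invert the factorisation of $\hat f^i_r$ at the level of measures rather than Laplace transforms. By the symmetry of the three diffusions it suffices to treat $x \notin D$ whose coordinates are ordered $x_1 \le x_2 \le x_3$. For the middle index $i = 2$ the argument in the proof of Lemma \ref{l:heurfnexists} shows that $\Pr_x(T_2 = 0) = 0$, so $f_2 \equiv 0$ discharges that case. For $i = 1$ (and, symmetrically, $i = 3$) the same argument identifies $\{T_1 = 0\}$, up to a $\Pr_x$-null set, with the event that the independent diffusions $X_2, X_3$, started from $x_2, x_3$, both exit the sub-interval $(x_1, 1)$ at the upper endpoint $1$; and on this event one has the exact identity $\tauopt = \exitST^{(2)}_1 + \exitST^{(3)}_1$.

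Writing $\mu_j^x$ for the sub-probability measure on $[0, \infty)$ defined by
\[
\mu_j^x(A) \eqdef \Pr_x\bigl(\exitST^{(j)}_1 \in A,\ \exitST^{(j)}_1 < \exitST^{(j)}_{x_1}\bigr),
\]
independence of $X_2$ and $X_3$ therefore gives
\[
\Pr_x\bigl(\tauopt \in \cdot,\ T_1 = 0\bigr) \;=\; \mu_2^x \ast \mu_3^x.
\]
It is thus enough to produce a Lebesgue density $m_j^x$ for each $\mu_j^x$: once this is done the convolution automatically has a density and
\[
f_1(x,t) \eqdef \int_0^t m_2^x(s)\, m_3^x(t-s)\, ds
\]
is the required defective density for $i=1$, with the case $i=3$ handled identically after swapping the roles of $0$ and $1$. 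Joint Borel measurability of $f_1$ in $(x,t)$ is inherited from the joint measurability of the hitting-time kernels of $X_2,X_3$ in their starting points and killing boundaries.

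The substantive step, and the only real obstacle, is the existence of a Lebesgue density for each one-dimensional exit-time sub-distribution $\mu_j^x$. Because the standing hypotheses only require $\sigma$ to be continuous and strictly positive on $[0,1]$, one cannot merely appeal to the smooth parabolic fundamental solution (which would want H\"older regularity of $\sigma^2$). Instead, one invokes the classical theory of regular one-dimensional diffusions: the speed-and-scale representation of $X_j$, together with the existence of diffusion local time at every interior point, implies that the law of the first hitting time of any interior point, and hence the sub-distribution of reaching one endpoint before another, is atomless on $(0,\infty)$ and admits a Lebesgue density. Equivalently, one may obtain the density from the eigenfunction expansion of the killed semigroup on $(x_1, 1)$. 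With this one-dimensional fact in hand, the convolution bookkeeping above completes the proof; the rest is routine measurability.
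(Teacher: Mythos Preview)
Your approach is essentially the paper's: exploit the decomposition from Lemma~\ref{l:heurfnexists} to write $\Pr_x(\tauopt \in \cdot,\ T_1 = 0)$ as the convolution $\mu_2^x \ast \mu_3^x$ of two one-dimensional exit-time sub-distributions, and then argue absolute continuity. There is one slip, however. You assert that \emph{each} $\mu_j^x$ has a Lebesgue density, and write $f_1$ explicitly as the convolution of the two densities. This fails on part of the domain: for $x \notin D$ with ordered coordinates one may have $x_3 = 1$ (provided $x_1, x_2 < 1$), in which case $\exitST^{(3)}_1 = 0$ and $\mu_3^x = \delta_0$ is a point mass with no density. Your formula for $f_1$ then collapses.

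The paper sidesteps this by observing that the convolution of two measures is absolutely continuous as soon as \emph{one} factor is, and then checking only $\mu_2^x$. Since $x \notin D$ forces $x_2 < 1$, and the degenerate case $x_2 = x_1$ gives $T_1 > 0$ almost surely (so nothing to prove), one may assume $x_2 \in (x_1,1)$ strictly. The paper then conditions $X_2$ to exit $(x_1,1)$ at the top via a Doob $h$-transform, obtaining another regular diffusion whose absorption-time law is absolutely continuous by \S4.11 of It\^o--McKean. Your appeal to speed--scale and the eigenfunction expansion of the killed semigroup would also justify this step, but you should invoke it for the single factor $\mu_2^x$ rather than for both. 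With that correction (and the symmetric remark for $i=3$ when $x_1 = 0$), your argument goes through and matches the paper's.
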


\begin{pf}
Suppose that $0 \leq x_1 \leq x_2 \leq x_3 \leq1$. Then the event $T_2
= 0$ is $\Pr_x$ null and consequently
$\Pr_x(\tauopt\in dt, T_2 = 0)$ vanishes for any $t$. That is,
 $f_2(x,t) = 0$.\

Existence of a density for $\Pr_x(\tauopt\in dt, T_i = 0)$, $i = 1,3,$
is essentially a corollary
of the decomposition of $\tauopt$ on $\{ T_i = 0 \}$ which was
discussed in the proof of Lemma
\ref{l:heurfnexists}. Let us consider the case $i = 1$ ($i = 3$ is
similar). Recall that if
$\exitST^{(i)}_a$ is the first hitting time of $a$ by $X_i$ and $x_1
\leq x_2 \leq x_3$ then
\[
\Pr_x (\tauopt\in B, T_1 = 0 ) = \Pr_x \bigl(\exitST
^{(2)}_1 + \exitST^{(3)}_1 \in B, \exitST^{(2)}_1 < \exitST
^{(2)}_{x_1}, \exitST^{(3)}_1 < \exitST^{(3)}_{x_1} \bigr).
\]
The right-hand side is the convolution of the sub-probability measures
\[
\Pr_x \bigl(\exitST^{(i)}_1 \in\cdot, \exitST^{(i)}_1 < \exitST
^{(i)}_{x_1} \bigr), \qquad  i = 1,2.
\]

Now, if $x_1 = x_2$, then $T_1 > 0$ almost surely under $\Pr_x$. Furthermore,
the assumptions $x_2 \leq x_3$ and $x \notin D$ imply $x_2 < 1$. So, we
may assume that $x_2$ is in
the interval $(x_1,1)$. In this case, $\{\exitST^{(2)}_1 < \exitST
^{(2)}_{x_1}\}$ is not null and $X_2$ can be conditioned,
via a Doob $h$-transform, to exit $(x_1,1)$ at the upper boundary. That is,
under the measure $\Pr_{x_2} (\cdot |\exitST^{(2)}_1 <
\exitST^{(2)}_{x_1}  )$, $X_2$ is a
regular diffusion on $(x_1,1]$ with generator\vspace*{1pt} $\Generator^h$ defined by
$\Generator^h f = (1/h)\Generator(hf)$, where
\[
h(x_2) \eqdef\Pr_{x_2} \bigl(\exitST^{(2)}_1 < \exitST
^{(2)}_{x_1} \bigr) = \frac{x_2 - x_1}{1-x_1}
\]
(e.g., Corollary~2.4, page~289 of \cite{MR1326606}) with absorption at
$1$. In particular,
the law of the first hitting time,
$\Pr_x (\exitST^{(2)}_1 \in\cdot |\exitST^{(2)}_1 <
\exitST^{(2)}_{x_1}  )$, has a density
(page~154 of \cite{itomckean74}). Thus,
%
\[
\Pr_x \bigl(\exitST^{(2)}_1 \in\cdot, \exitST^{(2)}_1 < \exitST
^{(2)}_{x_1} \bigr)
= \Pr_x \bigl(\exitST^{(2)}_1 \in\cdot |\exitST^{(2)}_1 <
\exitST^{(2)}_{x_1}  \bigr)\Pr_x \bigl(\exitST^{(2)}_1 <
\exitST^{(2)}_{x_1} \bigr)
\]
is also absolutely continuous and $\Pr_x (\tauopt\in\cdot, T_1
= 0 )$ is the convolution of two measures, at least one of which
has a density.
\end{pf}

The next step is to show that $v$ solves \eqref{e:genddtvxt} in a
probabilistic sense.

\begin{lemma}\label{l:rep}
Fix $i \in V$ and define the function $u\dvtx \SS\times[0,\infty) \to\R
$ by
%
\begin{equation}\label{e:rep}  \hspace*{25pt}
u(x,t) \eqdef\E_x\biggl [ v\bigl( X^{(i)}\bigl(t \wedge\rho^{(i)}\bigr), \bigl(t - \rho
^{(i)}\bigr)^+\bigr) - \int_0^{t \wedge\rho^{(i)}} f_i\bigl(X^{(i)}(s),
t-s\bigr)\,ds \biggr],
\end{equation}
where $\rho^{(i)} = \inf\{t \geq0\dvtx  X_i(t) \notin(0,1)\}$ and $f_i$
is the density from
Lemma \ref{l:densityexists}.
Then:
\begin{longlist}[(b)]
\item[(a)] for each $x \notin D$, $u(x,\cdot)$ has the same Laplace
transform as $v(x,\cdot)$,
\item[(b)] both $u(x,\cdot)$ and $v(x,\cdot)$ are right continuous,
and as a result,
\item[(c)] the tail probability $v$ is equal to $u$ and so has the
representation given in \eqref{e:rep}.
\end{longlist}
\end{lemma}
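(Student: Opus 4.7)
The plan is to prove (a), (b), (c) in order; the final equality in (c) then follows by Laplace-transform uniqueness from the equality of transforms in (a) combined with the right continuity in (b).

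For (a), I would compute $\int_0^\infty e^{-rt} u(x, t)\,dt$ directly by Fubini. In the first expectation, split the $t$-integral at $t = \rho^{(i)}$: on $\{t < \rho^{(i)}\}$ the controlled path $X^{(i)}(t)$ stays out of $D$ (only the $i$-th coordinate has moved, and it is still in $(0,1)$; the fixed coordinates $x_j, x_k$ cannot both lie in $\{0\}$ or both in $\{1\}$ because $x \notin D$), so $v(X^{(i)}(t), 0) = 1$. On $\{t \geq \rho^{(i)}\}$, substitute $\tau = t - \rho^{(i)}$ and use the elementary identity $\int_0^\infty e^{-r\tau}v(y,\tau)\,d\tau = (1 - \hat v_r(y))/r$. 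For the subtracted term, swap the $ds$ and $dt$ integrals and use $\int_0^\infty e^{-r\tau} f_i(y,\tau)\,d\tau = \hat f^i_r(y)$ to get $\E_x[\int_0^{\rho^{(i)}} e^{-rs}\hat f^i_r(X^{(i)}(s))\,ds]$. Combining and applying Lemma \ref{l:vrhatrep} collapses everything to $(1 - \hat v_r(x))/r$, which is the Laplace transform of $v(x,\cdot)$ by the identity displayed just before the lemma.

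For (b), $v(x,\cdot) = 1 - F_{\tauopt}(\cdot)$ is right continuous since distribution functions are. For $u(x,\cdot)$, the first expectation is right continuous by dominated convergence: the integrand is bounded by $1$ and right continuous in $t$ almost surely, by right continuity of $v(y,\cdot)$ for fixed $y$ together with continuity of $X^{(i)}(t\wedge\rho^{(i)})$ and of $(t-\rho^{(i)})^+$ in $t$. For the subtracted term, write by Fubini $B(x,t) = \int_0^t \phi(x,s,t-s)\,ds$, where $\phi(x,s,u) := \E_x[\mathbbm{1}_{s<\rho^{(i)}} f_i(X^{(i)}(s),u)]$ is non-negative and satisfies $\int_0^\infty \phi(x,s,u)\,du \leq \Pr_x(s<\rho^{(i)})$, hence $\phi \in L^1(\R_+^2)$ with total mass at most $\E_x[\rho^{(i)}]$. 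Right continuity of this convolution-type integral in $t$ then reduces, via a change of variable, to the classical fact that convolution with an $L^1$ kernel is continuous in the parameter.

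Part (c) is then immediate: $u(x,\cdot)$ and $v(x,\cdot)$ are bounded, measurable, and right continuous on $[0,\infty)$ with identical Laplace transforms, and classical uniqueness forces pointwise equality. The main obstacle is the right-continuity argument for $B$ in part (b): since $f_i$ is defined only as a density, and both the upper integration limit $t\wedge\rho^{(i)}$ and the second argument of $f_i$ depend on $t$, one cannot naively take pointwise limits or differentiate under the integral. The Fubini rewrite in terms of $\phi$ together with the $L^1$-translation property is what makes the right continuity clean, and this step is essential because without it one only gets a.e.-equality $u = v$ from (a), which is not strong enough to yield the representation \eqref{e:rep} at every $t$.
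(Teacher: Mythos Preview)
Your treatment of (a) and (c) matches the paper's argument essentially line for line, and your handling of the first expectation in (b) is close to the paper's (the paper splits explicitly on $\{\rho^{(i)}>t\}$ versus $\{\rho^{(i)}\le t\}$, which is really what makes your dominated-convergence step work, since $v$ is not known to be continuous in its \emph{first} argument; your appeal to ``continuity of $X^{(i)}(t\wedge\rho^{(i)})$'' is a red herring, but the conclusion is correct for the reason that on $\{t<\rho^{(i)}\}$ the value is identically $1$ while on $\{t\ge\rho^{(i)}\}$ the first argument is frozen).

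The genuine gap is in your right-continuity argument for the subtracted term $B(x,t)=\int_0^t\phi(x,s,t-s)\,ds$. Knowing only that $\phi\in L^1(\R_+^2)$ is \emph{not} enough to conclude that $t\mapsto\int_0^t\phi(s,t-s)\,ds$ is right continuous: this quantity is the density (at $t$) of the push-forward of the measure $\phi(s,u)\,ds\,du$ under $(s,u)\mapsto s+u$, and densities of $L^1$ push-forwards need not be continuous. Concretely, one can build $\phi\ge 0$ with $\int_0^\infty\phi(s,u)\,du\le 1$ for every $s$ and $\iint\phi<\infty$, yet with $\int_0^t\phi(s,t-s)\,ds$ unbounded near some $t_0$ (take $\phi$ concentrated on thin strips parallel to $\{s+u=\text{const}\}$). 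Your invocation of ``convolution with an $L^1$ kernel is continuous in the parameter'' would apply to $\int K(t-s)g(s)\,ds$ with $K\in L^1$ and $g\in L^\infty$, but $B$ is not of that form: $\phi$ depends jointly on both variables and neither factor is bounded.

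The paper closes this gap by bringing in diffusion-specific regularity: it writes $\phi(x,s,u)=\int_0^1 p^\dagger_s(x_i,y)\,f_i(\ldots,y,\ldots,u)\,dy$ using the transition density $p^\dagger$ of $X_i$ killed at $\{0,1\}$, changes variable to $s'=t-s$, and then uses that $p^\dagger_t(x_i,y)$ is \emph{continuous in $t>0$} (a classical fact for regular one-dimensional diffusions, cf.\ It\^o--McKean \S4.11). That continuity, combined with domination, is what gives right continuity of $B(x,\cdot)$. Your purely measure-theoretic route cannot reach this conclusion without importing some such regularity of either $p^\dagger$ or $f_i$.
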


\begin{pf}
(a) The Laplace transform of the tail probability
is, for $x \notin D$,
\begin{eqnarray*}
\int_0^\infty v(x,t) \exp(-rt)\,dt & = & \E_x\biggl [ \int_0^{\infty
} \Indi{\tauopt> t} \exp(-r t)\,dt  \biggr] \\
& = & \E_x \biggl[ \int_0^{\tauopt} \exp(-r t)\,dt  \biggr] \\
& = & r^{-1}  \bigl( 1 - \hat v_r(x) \bigr),
\end{eqnarray*}
using Fubini's theorem to get the first equality (the integrand is
nonnegative). Furthermore, for $x \in D$, both $v(x,t)$ and $1 - \hat
v_r(x)$ vanish
and so in fact, for \textit{any} $x \in\SS$ we have
%
\begin{equation}\label{e:laplacev}
\int_0^\infty v(x,t) \exp(-rt)\,dt = r^{-1}  \bigl( 1 - \hat
v_r(x) \bigr).
\end{equation}

Now, we consider the Laplace transform of $u$. By linearity of the
expectation operator,
%
\[
u(x,t) = \E_x \bigl[ v\bigl( X^{(i)}\bigl(t \wedge\rho^{(i)}\bigr), \bigl(t - \rho
^{(i)}\bigr)^+\bigr) \bigr] - \E_x \biggl[\int_0^{t \wedge\rho^{(i)}}
f_i\bigl(X^{(i)}(s), t-s\bigr)\,ds \biggr].
\]

First, consider the Laplace transform of the first member of the
right-hand side:
\[
\int_0^\infty\E_x \bigl[ v\bigl( X^{(i)}\bigl(t \wedge\rho^{(i)}\bigr), \bigl(t - \rho
^{(i)}\bigr)^+\bigr) \bigr]\exp(-rt)\,dt.
\]

Applying Fubini's theorem, the preceding expression becomes
\[
\E_x \biggl[ \int_0^\infty v\bigl( X^{(i)}\bigl(t \wedge\rho^{(i)}\bigr), \bigl(t -
\rho^{(i)}\bigr)^+\bigr)\exp(-rt)\,dt \biggr],
\]
which can be decomposed into the sum 
\begin{eqnarray*}
&&\E_x \biggl[\int_0^{\rho^{(i)}} v\bigl( X^{(i)}(t), 0\bigr)\exp(-rt)\,dt
\biggr]\\
&& \qquad {}+ \E_x \biggl[\int_{\rho^{(i)}}^\infty v\bigl( X^{(i)}\bigl(\rho^{(i)}\bigr), t -
\rho^{(i)}\bigr)\exp(-rt)\,dt \biggr].
\end{eqnarray*}
%

The first term in the sum is
%
\begin{equation}\label{e:laplaceu1}
\E_x  \biggl[ \int_0^{\rho^{(i)}} v\bigl( X^{(i)}(t), 0\bigr)\exp(-rt)\,dt
 \biggr] = r^{-1} \E_x \bigl[1 - \exp\bigl(-r\rho^{(i)}\bigr) \bigr],
\end{equation}
because when $x \notin D$, $\Pr_x$-almost-surely we have $X^{(i)}(t)
\notin D$ for $t < \rho^{(i)}$.
As for the second term, we shift the variable of integration to $u = t
- \rho^{(i)}$ and then use \eqref{e:laplacev}
to show that it is equal to
%
\begin{equation}\label{e:laplaceu2}
r^{-1} \E_x \bigl[ \exp\bigl(-r \rho^{(i)}\bigr)\bigl( 1- \hat v_r\bigl(X^{(i)}
\bigl(\rho
^{(i)}\bigr)\bigr) \bigr)  \bigr].
\end{equation}

The treatment of
%
\begin{equation}\label{e:laplaceu3}
\int_0^\infty\E_x  \biggl[ \int_0^{t \wedge\rho^{(i)}}
f_i\bigl(X^{(i)}(s), t-s\bigr)\,ds  \biggr] \exp(-rt)\,dt
\end{equation}
proceeds in a similar fashion---exchange the expectation and
outer integral and then decompose the integrals into $t < \rho^{(i)}$
and $t \geq\rho^{(i)}$.
The integral over $t < \rho^{(i)}$ is
\[
\E_x  \biggl[ \int_0^{\rho^{(i)}}\!\! \int_0^{t} f_i\bigl(X^{(i)}(s), t-s\bigr)\,ds
\exp(-rt)\,dt  \biggr].
\]
Exchanging the integrals in $t$ and $s$ gives
\[
\E_x \biggl[ \int_0^{\rho^{(i)}} \!\!\int_s^{\rho^{(i)}}
f_i\bigl(X^{(i)}(s), t-s\bigr) \exp(-rt) \,dt\,ds  \biggr].
\]

For the integral over $t \geq\rho^{(i)}$, we again exchange the
integrals in $t$ and $s$ to give
\[
\E_x \biggl[ \int_0^{\rho^{(i)}} \!\!\int_{\rho^{(i)}}^{\infty}
f_i\bigl(X^{(i)}(s), t-s\bigr) \exp(-rt) \,dt\,ds  \biggr].
\]
Summing these final two expressions and substituting $u = t-s$ shows
that \eqref{e:laplaceu3}
is equal to
\[
\E_x \biggl[ \int_0^{\rho^{(i)}}\!\!  \int_{0}^{\infty} f_i\bigl(X^{(i)}(s),
u\bigr) \exp(-ru)\,du \exp(-rs)\,ds  \biggr].
\]

The Laplace transform is a linear operator, and so we may sum
\eqref{e:laplaceu1}--\eqref{e:laplaceu3} to
show that the
Laplace transform of $u$ is equal to
%
\begin{eqnarray}\label{e:laplaceu4}
&&r^{-1}\E_x \bigl[1 - \exp\bigl(-r \rho^{(i)}\bigr) \hat v_r\bigl(X^{(i)}\bigl(\rho
^{(i)}\bigr)\bigr) \bigr]\nonumber
\\[-8pt]
\\[-8pt] && \qquad {}+ \E_x  \biggl[\int_0^{\rho^{(i)}} \hat
f_r^i\bigl(X^{(i)}(s)\bigr) \exp(-rs)\,ds \biggr],
\nonumber
\end{eqnarray}
where we have used
\[
\int_{0}^{\infty} f_i(x, u) \exp(-rt)\,du = \hat f_r^i(x)
\]
for $x \notin D$.

But, \eqref{e:laplaceu4} is exactly what we get by substituting the
representation for
$\hat v_r$ from Lemma \eqref{l:vrhatrep} into \eqref{e:laplacev}, and
so we are done.

(b) Right-continuity of $v$ in $t$ follows from the Monotone
Convergence theorem. A little more
work is required to see that $u$ is right-continuous. We begin by
observing that
if $\rho^{(i)} > t$ then $X_i$ has not been absorbed by time $t$ and
so, if $x \notin D$,
there is a $\Pr_x$-negligible set outside of which $X^{(i)}(t) \notin D$.

It follows that $\{X^{(i)}(t) \notin D, \rho^{(i)} > t \} = \{ \rho
^{(i)} > t \}$ up to a null set. Combining this with
the fact that $v(\cdot,0) = \Indi{\cdot\notin D}$ shows
\[
\E_x \bigl[ v\bigl(X^{(i)}\bigl(t \wedge\rho^{(i)}\bigr),
\bigl(t - \rho^{(i)}\bigr)^+\bigr)\Indi
{ \rho^{(i)} > t}  \bigr] = \Pr_x \bigl(\rho^{(i)} > t
\bigr)  \qquad \mbox{for } x \notin D.
\]
The latter is right-continuous in $t$ by the Monotone Convergence theorem.
The complementary expectation
\[
\E_x \bigl[ v\bigl(X^{(i)}\bigl(t \wedge\rho^{(i)}\bigr),
\bigl(t - \rho^{(i)}\bigr)^+\bigr)\Indi
{\rho^{(i)} \leq t}  \bigr]
\]
is equal to
\[
\E_x \bigl[ v\bigl(X^{(i)}\bigl(\rho^{(i)}\bigr)
,t - \rho^{(i)}\bigr)\Indi{\rho^{(i)}
\leq t}  \bigr],
\]
the right continuity of which follows from that of $v$ and the
indicator $\Indi{\rho^{(i)} \leq t}$, together with the Dominated
Convergence theorem.

We now consider the expectation of the integral,
\[
\E_x \biggl[ \int_0^{t \wedge\rho^{(i)}} f_i\bigl(X^{(i)}(s), t-s\bigr)\,ds
 \biggr].
\]
Using Fubini's theorem, we may exchange the integral and expectation to get
%
\begin{equation}\label{e:rightctyxikilled}
\int_0^{t} \E_x\bigl [f_i\bigl(X^{(i)}(s), t-s\bigr)\Indi{\rho^{(i)} > s}
 \bigr]\,ds.
\end{equation}

This suggests the introduction of $(p^\dagger_s; s \geq0)$, the
transition kernel of $X_i$
killed (and sent to a cemetery state) on leaving $(0,1)$. Such a
density exists by the
arguments in Section~{4.11} of \cite{itomckean74}.

For notational ease, let us assume $i=1$, then \eqref
{e:rightctyxikilled} can be written
\[
\int_0^{t} \int_0^1 p^\dagger_s(x_1,y) f_1\bigl((y,x_2,x_3), t-s\bigr)\,dy\,ds.
\]
Finally, changing the variable of integration from $s$ to $s^\prime=
t-s$ gives
\[
\int_0^{t} \int_0^1 p^\dagger_{t - s^\prime}(x_1,y)
f_1\bigl((y,x_2,x_3), s^\prime\bigr)\,dy\,ds^\prime,
\]
and so regularity of \eqref{e:rightctyxikilled} in $t$ is inherited
from $p^\dagger$.
This is sufficient because $p_t^\dagger$ is continuous in $t > 0$
(again see \cite{itomckean74}).

(c) It follows from (a) that for each $x \notin D$, $u(x,t)$ and
$v(x,t)$ are equal for almost every $t \geq0$. Hence, right continuity
is enough to show $v(x,t) = u(x,t)$ for every $t \geq0$.
\end{pf}

From the probabilistic representation for $v$, we need to deduce some
sub-martingale type inequalities
for $v(X^\straT(t), T-t)$, $0\leq t \leq T$. As we will see later,
it is enough to consider strategies that, for some $\epsilon> 0$, run
only one process
during the interval $(k\epsilon, (k+1)\epsilon)$, for integers $k
\geq0$. In other words, the rates for each process are either zero or
one and are constant over $(k\epsilon, (k+1)\epsilon)$.\vadjust{\goodbreak}

\begin{definition}[($\epsilon$-strategy)]\label{d:epsilonstrat} For
$\epsilon> 0$ we let $\Pi_\epsilon$
denote the set of strategies $\straT^\epsilon$ such that for any
integer $k \geq0$,
\[
\straT^\epsilon(t) = \straT^\epsilon(k\epsilon) + (t - k\epsilon
)\xi_k, \qquad
k \epsilon\leq t \leq(k+1)\epsilon,
\]
where $\xi_k$ takes values in the set of standard basis elements $\{
(1,0,0), (0,1,0), \break(0, 0,1)\}$.
\end{definition}

\begin{lemma}\label{l:submgeachcmpnt}
Suppose $x \in\SS$ and $0 \leq t \leq T$, then the following
sub-martingale inequalities hold.
\begin{longlist}[(b)]
\item[(a)] For $i \in V$,
\[
\E_x  \bigl[ v\bigl(X^{(i)}(t), T-t\bigr) \bigr] \geq v(x,T).
\]
%
%
\item[(b)] If $\straT^\epsilon\in\Pi_\epsilon$ then
\[
\E_x \bigl[ v\bigl(X^{\straT^\epsilon}(t), T-t\bigr)  \bigr] \geq v(x,T).
\]
\end{longlist}
\end{lemma}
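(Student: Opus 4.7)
The plan is to derive (a) from the probabilistic representation for $v$ established in Lemma~\ref{l:rep} by applying the strong Markov property of $X_i$ at time $t$, and then bootstrap to (b) by iterating (a) across the intervals of constancy of the $\epsilon$-strategy. Throughout I may assume $x\notin D$, since otherwise $v(x,T)=0$ and both inequalities are trivial; similarly, the representation in Lemma~\ref{l:rep} will be applied only at states outside $D$, which is legitimate because on $\{\rho^{(i)}>t\}$ the $i$-th coordinate of $X^{(i)}(t)$ lies in $(0,1)$, and the remaining coordinates have not been moved, so $X^{(i)}(t)\notin D$ almost surely.

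\paragraph{Part (a).} The strategy is to write $\E_x[v(X^{(i)}(t),T-t)]$ in the same form as the right-hand side of the representation in Lemma~\ref{l:rep} for $v(x,T)$, and to compare. On $\{\rho^{(i)}>t\}$, I apply Lemma~\ref{l:rep} to the state $X^{(i)}(t)$ with time budget $T-t$, then use the strong Markov property of $X_i$ at $t$ (shifting the integration variable by $+t$) to obtain
\[
\E_x\!\left[v(X^{(i)}(t),T-t);\rho^{(i)}>t\right]
=\E_x\!\left[v(X^{(i)}(T\wedge\rho^{(i)}),(T-\rho^{(i)})^+);\rho^{(i)}>t\right]
-\E_x\!\left[\int_t^{T\wedge\rho^{(i)}}\!f_i(X^{(i)}(s),T-s)\,ds;\,\rho^{(i)}>t\right].
\]
On $\{\rho^{(i)}\le t\}$, I use the fact that $v(y,\cdot)$ is non-increasing (it is a tail probability) together with $T-t\le T-\rho^{(i)}=(T-\rho^{(i)})^+$ to deduce
\[
\E_x\!\left[v(X^{(i)}(t),T-t);\rho^{(i)}\le t\right]\ge \E_x\!\left[v(X^{(i)}(\rho^{(i)}),(T-\rho^{(i)})^+);\rho^{(i)}\le t\right].
\]
Summing these two contributions and subtracting the representation of $v(x,T)$ given by Lemma~\ref{l:rep} leaves, after cancellation, the quantity
\[
\E_x\!\left[\int_0^{t\wedge\rho^{(i)}}\!f_i(X^{(i)}(s),T-s)\,ds\right],
\]
which is non-negative because $f_i\ge 0$. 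This proves (a).

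\paragraph{Part (b).} I proceed by induction on $k=0,1,\dots,\lfloor t/\epsilon\rfloor$, using the tower property with respect to the one-parameter filtration $\Filt^{\straT^\epsilon}$. On each interval $[k\epsilon,(k+1)\epsilon]$ the strategy runs exactly one component, so by the multiparameter Markov property summarised in the appendix, conditionally on $\Filt^{\straT^\epsilon}(k\epsilon)$ the evolution of the controlled process on that interval is precisely that of the single-coordinate process $X^{(\xi_k)}$ started from $X^{\straT^\epsilon}(k\epsilon)$. Applying part~(a) to this conditional picture with initial state $X^{\straT^\epsilon}(k\epsilon)$, time horizon $T-k\epsilon$, and elapsed time $\epsilon$ gives
\[
\E\!\left[v(X^{\straT^\epsilon}((k+1)\epsilon),T-(k+1)\epsilon)\,\big|\,\Filt^{\straT^\epsilon}(k\epsilon)\right]\ge v(X^{\straT^\epsilon}(k\epsilon),T-k\epsilon).
\]
Iterating and taking expectations yields the inequality at the grid points. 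For a general $t\in[k\epsilon,(k+1)\epsilon]$ a final application of (a) with elapsed time $t-k\epsilon$ closes the gap.

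\paragraph{Main obstacle.} The delicate step is the appeal to the strong Markov property in part~(a) to relate the two applications of Lemma~\ref{l:rep}: one must verify that shifting the integration variable through the Markov kernel for $X_i$ exactly reproduces the integral $\int_t^{T\wedge\rho^{(i)}} f_i(X^{(i)}(s),T-s)\,ds$, and one must handle the event $\{\rho^{(i)}\le t\}$ by exploiting the monotonicity of $v$ in its time argument rather than Lemma~\ref{l:rep} itself (since $X^{(i)}(t)$ may then lie on $\partial\SS$). Once this bookkeeping is done, part~(b) is a clean iteration using the multiparameter Markov property.
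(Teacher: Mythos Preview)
Your proposal is correct and follows essentially the same route as the paper: both arguments derive (a) by applying the representation of Lemma~\ref{l:rep} at the state $X^{(i)}(t)$ with time budget $T-t$, pushing this through the Markov property of $X_i$, and comparing with the representation of $v(x,T)$; the residual is exactly $\E_x\bigl[\int_0^{t\wedge\rho^{(i)}}f_i(X^{(i)}(s),T-s)\,ds\bigr]\ge 0$. Part~(b) in both cases is the iteration of (a) across the subintervals $[k\epsilon,(k+1)\epsilon]$ via the tower property and the Markov structure of the controlled process.

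The one genuine difference is your explicit split into $\{\rho^{(i)}>t\}$ and $\{\rho^{(i)}\le t\}$. The paper handles both cases at once through the shift operator identity $\rho^{(i)}\circ\theta_t=(\rho^{(i)}-t)\vee 0$, but on $\{\rho^{(i)}\le t\}$ that computation actually yields the time argument $T-t$ rather than $(T-\rho^{(i)})^+$, so the claimed equality between \eqref{e:lsubmg1} and \eqref{e:lsubmgineq1} is, strictly speaking, only an inequality there. Your use of the monotonicity of $v(y,\cdot)$ on that event makes this step transparent and is arguably the cleaner way to close the argument. For part~(b), note that what you call ``the multiparameter Markov property summarised in the appendix'' is not stated there as such; the paper instead verifies the needed conditional Markov identity \eqref{e:discretestratFinalEq} directly from the discreteness of the stopping point $\straT^\epsilon(k\epsilon)$ and the independence of the filtrations $\Filt_i$, so you should be prepared to supply that short computation.
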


\begin{pf}
Consider first the quantity
%
\begin{equation}\label{e:lsubmg1}
\E_x  \bigl[ \E_{X^{(i)}(t)} \bigl [ v\bigl( X^{(i)}\bigl((T-t) \wedge\rho
^{(i)}\bigr), \bigl(T- t - \rho^{(i)}\bigr)^+\bigr)  \bigr] \bigr].
\end{equation}

Our Markovian setup comes with a shift operator $\theta= \theta
^{(i)}$ for $X^{(i)}$
defined by $X^{(i)}\circ\theta_s(\omega,t) = X^{(i)}(\theta_s\omega
,t) = X^{(i)}(\omega,s+t)$ for each $\omega\in\Omega$.
Using the Markov property of $X^{(i)}$, \eqref{e:lsubmg1} becomes
\[
\E_x \bigl [ \E_x  \bigl[ v\bigl( X^{(i)}\bigl( (T-t) \wedge\rho^{(i)}\bigr), \bigl(T-
t - \rho^{(i)}\bigr)^+\bigr)
 \circ\theta_t\big|\Filt_i(t)  \bigr]  \bigr].
\]

From here, use the Tower Property and the fact that $\rho^{(i)} \circ
\theta_t = (\rho^{(i)} - t) \vee0$
to find that \eqref{e:lsubmg1} equals
%
\begin{equation}\label{e:lsubmgineq1}
\E_x  \bigl[ v\bigl( X^{(i)}\bigl(T \wedge\rho^{(i)}\bigr), \bigl(T - \rho
^{(i)}\bigr)^+\bigr) \bigr].
\end{equation}

We can give a similar treatment for
%
\begin{equation}\label{e:lsubmg2}
\E_x  \biggl[ \E_{X^{(i)}(t)}  \biggl[ \int_0^{(T-t) \wedge\rho
^{(i)}} f_i\bigl(X^{(i)}(s), T-t-s\bigr)\,ds \biggr]  \biggr].
\end{equation}

Again using the Markov property of $X^{(i)}$, \eqref{e:lsubmg2} becomes
\[
\E_x  \biggl[ \E_x  \biggl[ \int_0^{(T-t) \wedge\rho^{(i)}}
f_i\bigl(X^{(i)}(s), T-t-s\bigr)\,ds \circ\theta_t\Big|\Filt_i(t) \biggr]  \biggr].
\]

Substituting in for $X^{(i)}\circ\theta_t$ and $\rho^{(i)} \circ
\theta_t$
and using the Tower Property, the latter expectation is seen to be
\[
\E_x \biggl [ \int_0^{(T-t) \wedge(\rho^{(i)}-t) \vee0}
f_i\bigl(X^{(i)}(s+t), T-t-s\bigr)\,ds \biggr].
\]

Now make the substitution $u = s + t$ in the integral and use the fact
that $f_i$ is nonnegative
to show that \eqref{e:lsubmg2} is less than or equal to
%
\begin{equation}\label{e:lsubmgineq2}
\E_x  \biggl[ \int_0^{T \wedge\rho^{(i)}} f_i\bigl(X^{(i)}(u),
T-u\bigr)\,du \biggr].
\end{equation}

The final step is to note that, by Lemma \ref{l:rep},
\begin{eqnarray*}
v(x, T-t) &=& \E_x  \bigl[ v\bigl( X^{(i)}\bigl(T-t \wedge\rho^{(i)}\bigr),
\bigl (T- t -
\rho^{(i)}\bigr)^+\bigr) \bigr] \\
&& {}  - \E_x  \biggl[ \int_0^{(T-t) \wedge\rho^{(i)}} f\bigl(X^{(i)}(s),
T-t-s\bigr)\,ds \biggr],
\end{eqnarray*}
and so $\E_x[v(X^{(i)}(t), T-t)]$ is equal to \eqref{e:lsubmg1} minus
\eqref{e:lsubmg2},
which by the argument above is greater than or equal to%
\[
\E_x  \bigl[v\bigl( X^{(i)}\bigl(T \wedge\rho^{(i)}\bigr), \bigl(T - \rho
^{(i)}\bigr)^+\bigr) \bigr] - \E_x \biggl[\int_0^{T \wedge\rho^{(i)}}
f_i\bigl(X^{(i)}(u), T-u\bigr)\,du \biggr].
\]

Again appealing to Lemma \ref{l:rep} shows that the latter is exactly $v(x,T)$.

(b) It is sufficient to prove that for $k \epsilon\leq t \leq
(k+1)\epsilon$ we
have
%
\begin{equation}\label{e:discretestratsubmg}
\E_x \bigl[ v\bigl(X^{\straT^\epsilon}(t), T-t\bigr) |\Filt^{\straT^\epsilon
}(k\epsilon) \bigr] \geq
v\bigl(X^{\straT^\epsilon}(k\epsilon), T-k\epsilon\bigr).
\end{equation}
The desired result then follows by applying the Tower Property of
conditional expectation and iterating this inequality.
If $X^{\straT^\epsilon}$ enjoys the Markov property, this inequality
follows from (a), but in general
our strategies can be non-Markov so we must do a little extra work.\vspace*{2pt}

Let us take $\nu\eqdef\straT^\epsilon(k\epsilon)$ and $\mcH\eqdef
\Filt^{\straT^\epsilon}(k\epsilon)$. Then $\nu$ takes values\vspace*{-1pt} in the
grid $\mcZ\eqdef\{0,\epsilon, 2\epsilon, \ldots \}^3$ and $\Lambda
\in\mcH$ implies
that $\Lambda\cap\{\nu= z\}$ is an element of the $\sigma$-field
$\Filt(z) = \sigma(\Filt_1(z_1),\ldots ,\Filt_3(z_3))$ for $z \in
\mcZ$. It follows
from the definition of conditional expectation that 
$\Pr_x$-almost-surely we have
%
\begin{equation}\label{e:condexpSP}
\E_x ( \cdot| \mcH ) = \E_x  ( \cdot| \Filt(z)
 ) \qquad  \mbox{on } \{\nu= z\}.
\end{equation}

Now, suppose that $\xi_k \in\{ (1,0,0), (0,1,0), (0,0,1)\}$
defines the process that $\straT^\epsilon$ runs during the interval
$(k\epsilon, (k+1)\epsilon)$, that is,
\[
\straT^\epsilon(t) = \straT^\epsilon(k\epsilon) + (t - k\epsilon
)\xi_k, \qquad
k \epsilon< t < (k+1)\epsilon.
\]
By continuity of $\straT^\epsilon_i$ and right-continuity of $\Filt
^{\straT^\epsilon}$ (Lemma \ref{l:rightctyFc}),\vspace*{-1pt} $\xi_k$ must be
$\mcH$-measurable. So, if $A \eqdef A_1 \times A_2 \times A_3$ with
$A_i$ Borel measurable for each $i \in\V$,
\eqref{e:condexpSP} gives the equality
\[
\E_x \bigl(\Indi{\nu= z,  X^{\straT^{\epsilon}}(t) \in A,  \xi
_k = e_i}  | \mcH \bigr) =
\Indi{\nu= z,  \xi_k = e_i} \E_x \bigl(\Indi{X(z+(t-k\epsilon
)e_i) \in A}  | \Filt(z)  \bigr),
\]
where $X(z) = (X_1(z_1), X_2(z_2), X_3(z_3))$.

Next, we use the facts that $\Indi{X_j(z_j) \in A_j}$ is $\Filt(z)$
measurable for each $j$ and that the filtration $\Filt_i$ of $X_i$ is
independent of $\Filt_j$ for $j \neq i$, to show that the preceding
expression is equal to
\[
\Indi{\nu= z,  \xi_k = e_i, X_j(z_j) \in A_j, j \neq i} \E_x
\bigl[\Indi{X_i(z_i+(t-k\epsilon)) \in A_i}  | \Filt_i(z_i)  \bigr].
\]

Finally, the Markov property of $X_i$ allows us to write this as
\[
\Indi{\nu= z,\xi_k = e_i} \E_{X(z)} \bigl[\Indi
{X^{(i)}(t-k\epsilon) \in A} \bigr].
\]

As $\E_\cdot[v(X^{(i)}(t),s)]$ is Borel measurable for any $s, t \geq
0$, this is enough to conclude that in our original notation, on $\{\xi
_k = e_i\}$,
%
\begin{equation}\label{e:discretestratFinalEq} \quad
\E_x \bigl[v\bigl(X^{\straT^{\epsilon}}(t),T-t\bigr) |\Filt^{\straT^\epsilon
}(k\epsilon) \bigr]
= \E_{X^{\straT^{\epsilon}}(k\epsilon)} \bigl[v\bigl(X^{(i)}(t-k\epsilon
),T-t\bigr)  \bigr].
\end{equation}

But part (a) shows that
\[
\E_{x}\bigl [v\bigl(X^{(i)}(t-k\epsilon),(T - k\epsilon) -(t - k\epsilon
)\bigr) \bigr] \geq v(x,T-k\epsilon),
\]
and so the right-hand side of \eqref{e:discretestratFinalEq} is
greater than or equal to
$v(X^{\straT^{\epsilon}}(k\epsilon), T-k\epsilon)$.
\end{pf}

\subsection{\texorpdfstring{Proof of Theorem
\protect\ref{t:stochminimality}}{Proof of Theorem 1.1}}

It is now relatively painless to combine the ingredients above. We take
an arbitrary strategy
$\straT$, use Lemma \ref{l:approxstrat} to approximate it by the
family $\straT^\epsilon$, $\epsilon> 0$, and then use Lemma \ref
{l:submgeachcmpnt} part (b) with $t = T \geq0$ to show that
\[
\Pr_x( \tau^{\straT^\epsilon} > T ) = \E_x [ v(X^{\straT
^\epsilon}(T), 0) ] \geq v(x,T)
\]
for any $x \notin D$ (equality holds trivially for $x \in D$).

The approximations are such that $\straT(t) \preceq\straT^\epsilon
(t + M\epsilon)$ for some constant $M > 0$. Thus, $\tau^\straT\leq
t$ implies that $\tau^{\straT^\epsilon} \leq t+ M\epsilon$.
More usefully, the contrapositive is that $\tau^{\straT^\epsilon} >
t + M \epsilon$ implies $\tau^\straT> t$ and so monotonicity of the
probability measure $\Pr_x$ then ensures
\[
\Pr_x( \tau^{\straT} > t ) \geq \Pr_x( \tau^{\straT^\epsilon} >
t + M\epsilon) \geq v(x,t + M\epsilon).
\]

Taking the limit $\epsilon\to0$ and using right continuity of
$v(x,t)$ in $t$
completes the proof.

\section{Existence and almost sure uniqueness of $\straTopt$}\label
{s:stratexists}\label{sec4}

In this section, we give a proof for Lemma \ref{l:stratexists}. Recall
that we
wish to study strategies $\straT$ that satisfy the property
(RTM) for each $i \in V$, $\straT_i$ increases at time $t
\geq0$ [i.e., for every $s > t$, $\straT_i(s) > \straT_i(t)$]
only if
\[
X_j^{\straT}(t) \leq X_i^{\straT}(t) \leq X_k^{\straT}(t)
\]
for some choice $\{j,k\} = V-\{i\}$.

Our idea is to reduce the existence and uniqueness of our strategy to a
one-sided
problem. Then, we can use the following result, taken from Proposition~5 and Corollary~13 in
\cite{mandelbaum1987cma} (alternatively Section~{5.1} of \cite
{KaspiMandelbaum95} or Section~{2} of \cite{barlow98variably}).

\begin{lemma}\label{l:mandelbaumstrat}
Suppose that $(Y_i(t); t \geq0)$, $i = 1,2,$ are independent and
identically distributed regular It{\^o} diffusions on $\R$, beginning at
the origin and with
complete, right continuous filtrations $(\FiltH_i(t); t \geq0)$. Then:
\begin{longlist}[(b)]
\item[(a)] There exists a strategy $\gamma= (\gamma_1(t), \gamma
_2(t); t \geq0)$
[with respect to the multiparameter filtration $\FiltH= (\sigma
(\FiltH_1(z_1),\FiltH_2(z_2)); z \in\R_+^2)$] such that $\gamma_i$
increases only at times $t \geq0$ with
\[
Y^\gamma_i(t) = Y^\gamma_1(t) \wedge Y^\gamma_2(t),
\]
that is, ``$\gamma$ follows the minimum of $Y_1$ and $Y_2$.''
\item[(b)] If $\gamma^\prime$ is another strategy with this
property, then, almost surely,
$\gamma^\prime(t) = \gamma(t)$ for every $t \geq0$. That is,
$\gamma$ is a.s. unique.
\item[(c)] The maximum $Y^\gamma_1(t) \vee Y^\gamma_2(t)$ increases
with $t$.
\end{longlist}
\end{lemma}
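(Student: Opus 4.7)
My plan is to construct $\gamma$ by a \emph{switch-when-overtaken} rule: while running $Y_i$ with $Y_j^\gamma$ frozen, we continue until the running trajectory first exceeds the frozen value, at which instant the two observed values coincide and we switch to $Y_j$. Starting from $(Y_1(0),Y_2(0))=(0,0)$ this produces a (possibly dense) sequence of stopping times, and the naive inductive description breaks down at ties, most severely at the origin. To bypass this I would recast the construction as a one-dimensional Skorohod reflection for the pair $(M, C) = (Y_1^\gamma \wedge Y_2^\gamma,\, Y_1^\gamma \vee Y_2^\gamma)$: $M$ should evolve as one of the diffusions reflected below a non-decreasing ceiling $C$, with $C$ incremented only at times $\{M = C\}$ and the identity of the ``carrier'' of $M$ switching as $C$ ticks up. Existence and a.s.\ uniqueness of such $(M,C)$ are standard one-dimensional Skorohod facts, and $\gamma_i(t)$ can be recovered as the accumulated time during which $Y_i$ plays the role of $M$. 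This is the piece I expect to be hardest to write down in full rigour, which is why one defers it to Mandelbaum's cited framework.

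\textbf{Part (b): uniqueness.} Fix a realisation of $(Y_1,Y_2)$ and suppose $\gamma, \gamma'$ both follow the minimum. Set $h(t) \eqdef \gamma_1(t) - \gamma_1'(t)$; then $h$ is $1$-Lipschitz. If $h \not\equiv 0$ there exist $a<b$ with $h(a)=0$ and $h>0$ on $(a,b)$. At a.e.\ $s \in (a,b)$, $\dot h(s)>0$ forces $\dot\gamma_1(s) > \dot\gamma_1'(s) \geq 0$ and $\dot\gamma_2'(s) > \dot\gamma_2(s) \geq 0$, so the follow-the-minimum property applied to each strategy yields the pair $Y_1(\gamma_1(s)) \leq Y_2(\gamma_2(s))$ and $Y_2(\gamma_2'(s)) \leq Y_1(\gamma_1'(s))$. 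Letting $s \downarrow a$ and using $\gamma(a) = \gamma'(a)$ together with continuity of $Y_1, Y_2$ shows $a$ must itself be an equality-time $Y_1(\gamma_1(a)) = Y_2(\gamma_2(a))$. Regularity of the diffusions (no flat pieces and a.s.\ immediate separation in both directions after any point) then forces the pathwise lower trajectory to be unambiguous in every right neighborhood of $a$, so the two strategies must in fact agree there, contradicting $h>0$ on $(a,b)$. Hence $h \equiv 0$ and $\gamma = \gamma'$ pathwise.

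\textbf{Part (c): monotonicity of the maximum.} By follow-the-minimum, $\gamma_j$ increases at time $t$ only when $Y_j^\gamma(t) \leq Y_i^\gamma(t)$; hence on any open interval with $Y_j^\gamma > Y_i^\gamma$ strictly, the function $Y_j^\gamma$ is locally constant and so $C(t) = Y_1^\gamma(t) \vee Y_2^\gamma(t)$ is constant there. Continuity of the two observed processes then forces $C$ to be continuous across any equality-time. Combining these, $C$ can decrease neither on a strict interval (ruled out by constancy) nor across an equality-time (ruled out by continuity together with the fact that $C$ is the pointwise maximum), and is therefore non-decreasing.
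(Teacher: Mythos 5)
First, a point of reference: the paper does not prove this lemma at all --- it is imported from Mandelbaum (Proposition 5 and Corollary 13 of \cite{mandelbaum1987cma}, alternatively \S 5.1 of \cite{KaspiMandelbaum95}), so there is no internal proof to compare yours against. Judged on its own terms, your attempt concentrates its handwaving at exactly the points where the real work lies.

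The decisive gap is in part (b). Your argument correctly reduces uniqueness to the claim that from a tie time $a$, where $Y_1(\gamma_1(a)) = Y_2(\gamma_2(a))$, the follow-the-minimum continuation is pathwise unique on a right neighbourhood. But that claim \emph{is} the lemma: both processes start tied at the origin, so the entire difficulty is concentrated at ties, and ``no flat pieces and immediate separation after any point'' is a property of a \emph{single} diffusion that says nothing about which of the two should be run next. The input actually required is a statement about the pair: the running infima of the two independent diffusions, restarted from the tie in their own clocks, a.s.\ stall at distinct levels --- equivalently, their lower envelopes are never simultaneously flat. This is Mandelbaum's Proposition 5, and it is exactly what the present paper invokes in Section \ref{s:stratexists} (the a.s.\ strict ordering $l_3 < l_2 < l_1$ and the elimination of the ``simultaneously flat lower envelopes'' event). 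One also needs this simultaneously over the uncountably many random tie times, which is why the envelope formulation, rather than a fixed-$a$ argument, is essential. Two further problems. In (a), the appeal to ``standard one-dimensional Skorohod facts'' is not available: the equation satisfied by $M$ is a perturbed diffusion with perturbation parameter $-1$, outside the range covered by the standard existence/uniqueness theory for perturbed diffusions (the paper makes this very point in Section 5), and even granting $(M,C)$ you must still reconstruct the two clocks and verify the adaptedness condition (C3), which is the substance of the cited construction. In (c), ``constant on each strict-order interval plus continuous'' does not imply non-decreasing: the equality set $\{t: Y_1^\gamma(t)=Y_2^\gamma(t)\}$ is a.s.\ an uncountable nowhere-dense closed set (it is the zero set of $\sup_{s\le t}M_s - M_t$), and a continuous function that is locally constant off such a set can still decrease on it, as a decreasing devil's staircase shows. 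The correct route to (c) is the identity $Y_1^\gamma(t)\vee Y_2^\gamma(t) = \bigl(\sup_{s\le t} M_s\bigr) \vee \bigl(Y_1^\gamma(0)\vee Y_2^\gamma(0)\bigr)$, which comes out of the construction itself and is the two-process analogue of the relation used in Lemma \ref{l:MisaDPBM}.
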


We first consider the question of uniqueness, it will then be obvious
how $\straTopt$ must
be defined. Suppose that $\straT$ is a strategy satisfying (RTM).

If $X_1(0) < X_2(0) = X_3(0)$, then $\straT$ cannot run $X_1$ (i.e.,
$\straT_1$ does not increase)
before the first time $\nu$ that either $X^\straT_2$ or $X^\straT_3$
hit $X_1(0)$.
Until then (or until a decision is made, whichever comes first),
$\straT_2$ may
increase only at times $t \geq0$ when $X^\straT_2(t) \leq X^\straT_3(t)$
and $\straT_3$ only when $X^\straT_3(t) \leq X^\straT_2(t)$. Hence,
on $\tau^\straT\wedge\nu\geq t$, the value of $\straT(t)$ is
determined by the strategy
in Lemma \ref{l:mandelbaumstrat}. Now, $X^\straT_2 \vee X^\straT_3$ increases
during this time, and so if $\nu< \tau^\straT$, we have
\[
X_1(0) = X^\straT_1(\nu) = X^\straT_2(\nu) \wedge X^\straT_3(\nu)
< X^\straT_2(\nu) \vee X^\straT_3(\nu).
\]
So again, we are in a position to apply the argument above,
and can do so repeatedly until a decision is made. In fact, it takes
only a finite number of iterations of the argument to determine $\straT(t)$
for each $t \geq0$ (on $\tau^\straT\geq t$) because each diffusion
$X_i$ is continuous,
the minimum $X^\straT_1 \wedge X^\straT_2 \wedge X^\straT_3$ is decreasing
and the maximum $X^\straT_1 \vee X^\straT_2 \vee X^\straT_3$ increasing.
If $X_1(0) < X_2(0) < X_3(0)$, then $\straT$ must run $X_2$
exclusively until it hits
either $X_1(0)$ or $X_3(0)$. From then on, the arguments of the
previous case apply.

The remaining possibility is that $X_1(0) = X_2(0) = X_3(0) = a \in(0,1)$.
We shall define random times $\nu_\epsilon$, $0 < \epsilon< (1 -
a)\wedge a$, such that:
\begin{itemize}
\item$\straT(\nu_\epsilon)$ is determined by the property (RTM),
\item under some labeling, either
\[
a - \epsilon < X_1^\straT(\nu_\epsilon) < a < X^\straT_2(\nu
_\epsilon) = X^\straT_3(\nu_\epsilon)= a+\epsilon
\]
or
\[
a - \epsilon = X_1^\straT(\nu_\epsilon) = X^\straT_2(\nu_\epsilon
) < a < X^\straT_3(\nu_\epsilon) < a+\epsilon
\]
and
\item$\nu_\epsilon\to0$ as $\epsilon\to0$.
\end{itemize}
Again, we may then use the one-sided argument to see that, almost
surely, on $\nu_\epsilon\leq t \leq\tau^\straT$, $\straT(t)$ is
determined by (RTM). This is sufficient because $\nu_\epsilon\to0$
as $\epsilon\to0$.

To construct $\nu_\epsilon$, suppose, without loss of generality,
that $X_1$ and $X_2$
both exit $(a -\epsilon, a+ \epsilon)$ at the upper boundary. We
denote by $\alpha_i$
the finite time taken for this to happen, that is, 
%
\[
\alpha_i \eqdef\inf\{ t > 0\dvtx  X_i(t) \notin(a - \epsilon, a +
\epsilon) \}.
\]

Define
\[
l_i \eqdef\inf_{0 \leq s \leq\alpha_i} X_i(s)
\]
to be the lowest value attained by $X_i$ before it exits
$(a - \epsilon, a + \epsilon)$. It follows from Proposition~5 of
\cite{mandelbaum1987cma} that it is
almost sure that the $l_i$ are not equal and so, we may assume that
$l_3 < l_2 < l_1$ (by relabeling if necessary).

Intuitively, (RTM) means that $X^\straT_1$ and $X^\straT_2$ should hit
$a+\epsilon$ together while $X^\straT_3$ gets left down at $l_2$.
We already know it takes time $\alpha_i$ for $X_i$ to hit $a+\epsilon
$ ($i = 1,2$)
and $X_3$ takes time
\[
\beta_3 \eqdef\inf\{ t > 0 \dvtx  X_3(t) = l_2 \}
\]
to reach $l_2$. So, we set $\nu_\epsilon= \alpha_1 + \alpha_2 +
\beta_3$,
and claim that
\[
\straT(\nu_\epsilon) = (\alpha_1,\alpha_2,\beta_3).
\]
The proof proceeds by examining the various cases. Firstly, if
$\straT_1(\nu_\epsilon) > \alpha_1$ and $\straT_2(\nu_\epsilon)
\geq\alpha_2$, then necessarily $\straT_3(\nu_\epsilon) < \beta_3$
and $X_3(z_3) > l_2$ for any $z_3 \leq\straT_3(\nu_\epsilon)$. But,
then there
exist times $\alpha^\prime_i < \straT_i(\nu_\epsilon)$ ($i=1,2$) with
\[
l_2 = X_2(\alpha^\prime_2) < X_3(z_3) < X_1(\alpha^\prime_1) = a +
\epsilon
\]
for any $z_3 \leq\straT_3(\nu_\epsilon)$, contradicting (RTM).

The second case is that $\straT_1(\nu_\epsilon) < \alpha_1$
and $\straT_2(\nu_\epsilon) \leq\alpha_2$. Necessarily, we then have
$\straT_3(\nu_\epsilon) > \beta_3$.
Now, $X_i(z_i) \geq l_2$ for $z_i \leq\alpha_i$, $i=1,2,$ and so
(RTM) implies that $X_3(z_3) \geq l_2$ as well for $z_3 \leq\straT
_3(\nu_\epsilon)$.
In addition, (RTM) and $\straT_3(\nu_\epsilon) > \beta_3$ imply that
\[
\straT_2(\nu_\epsilon) \geq\inf\{ t>0\dvtx  X_2(t) = l_2 \}
\]
[otherwise $X_3(\beta_3) < X_i(z_i)$ for $z_i \leq\straT_i(\nu
_\epsilon)$, $i=1,2$].
So, both $X_2$ and $X_3$ have attained $l_2$
and then stayed above it for a positive amount of time.
But, by Proposition~5 in \cite{mandelbaum1987cma}, this event (that
``the lower envelopes of $X_2$ and $X_3$ are simultaneously flat'') has
probability zero.

The final case $\straT_1(\nu_\epsilon) > \alpha_1$ and $\straT
_2(\nu_\epsilon) \leq\alpha_2$
has two subcases, $\straT_3(\nu_\epsilon) \leq\beta_3$ and $\straT
_3(\nu_\epsilon) > \beta_3$---both can be eliminated by the methods above. The only
remaining possibility is that $\straT_i(\nu_\epsilon) = \alpha_i$
for $i = 1,2$
and $\straT_3(\nu_\epsilon) = \beta_3$.

The discussion above tells us how to define $\straTopt$---if $X_1(0)
< X_2(0) \leq X_3(0)$
under some labelling, then we just alternate the one-sided construction
from Lemma
\ref{l:mandelbaumstrat} repeatedly to give a strategy satisfying (C1)--(C3).
If $X_1(0) = X_2(0) = X_3(0) = a \in(0,1)$, take $0< \epsilon< a
\wedge(1-a)$
and define $\straTopt(\nu_u)$, $0 < u \leq\epsilon$, via the
construction above.
Now, $\nu_u$ is only left continuous, so we have yet to define
$\straTopt$ on
the stochastic intervals $(\nu_u, \nu_{u+}]$, $u \leq\epsilon$.
But, this is easily done because
$X^\straTopt(\nu_u)$ has exactly two components equal and so we can
again use the
one-sided construction on this interval. We define $\straTopt$ on
$(\nu_\epsilon, \tau^\straTopt]$ similarly.
The properties (C1) and (C2) are readily verified. To confirm (C3), we
first note that $\straTopt$ satisfies (RTM). But (RTM)
gives us almost sure uniqueness of the paths of $\straTopt$. It
follows that
our definition of $\straTopt$ does not depend on $\epsilon$.
The second observation, which is not trivial, is that $\straT$
satisfies (C3)
with respect to the filtration $\Filt^\epsilon$ obtained
by\vspace*{-1.5pt} enlarging $\Filt$ to include $\bigvee_{i=1}^3 \Filt_i(\alpha
^\epsilon_i)$,
where $\alpha^\epsilon_i \eqdef\inf\{ t > 0\dvtx  X_i(t) \notin(a -
\epsilon, a + \epsilon) \}$.
That is, $\Filt^\epsilon$ contains the information necessary to
construct $C(\nu_\epsilon)$.
Property (C3) follows because $\Filt^\epsilon(\eta) \to\Filt(\eta
)$ as $\epsilon\to0$
for any $\eta\in\R_+^3$.

\section{$X^\straTopt$ as a doubly perturbed diffusion}\label{s:perturbedBM}\label{sec5}

We now turn our attention to the optimally controlled process
$X^\straTopt$.
For convenience, we will work with the minimum
\[
I_t \eqdef X^\straTopt_1(t) \wedge X^\straTopt_2(t)\wedge X^\straTopt_3(t),
\]
maximum
\[
S_t \eqdef X^\straTopt_1(t) \vee X^\straTopt_2(t)\vee X^\straTopt_3(t)
\]
and middle value
\[
M_t \eqdef\bigl(X^\straTopt_1(t) \vee X^\straTopt_2(t)\bigr) \wedge
\bigl(X^\straTopt_1(t) \vee X^\straTopt_3(t)\bigr) \wedge\bigl(X^\straTopt_2(t)
\vee X^\straTopt_3(t)\bigr), \qquad  t \geq0,
\]
of the components of $X^\straTopt$ [so, if $X^\straTopt_1(t) \leq
X^\straTopt_2(t) \leq X^\straTopt_3(t)$, then $I_t = X^\straTopt
_1(t)$, $M_t = X^\straTopt_2(t), S_t = X^\straTopt_3(t)$].
There is no ambiguity when the values of the components are equal since
we are not formally
\textit{identifying} $I_t$, $M_t$ and $S_t$ with a particular component
of $X^\straTopt$.

Clearly, $M$ behaves as an It{\^o} diffusion solving \eqref{e:itosde} away from
the extrema $[0,1]$ and $S$, while at the extrema it experiences a perturbation.
This behavior is reminiscent of \textit{doubly perturbed Brownian motion},
which is defined as the (pathwise unique) solution $(X^\prime_t; t
\geq0)$
of the equation
\[\label{e:DPBM}
X^\prime_t = B^\prime_t + \alpha\sup_{s \leq t} X^\prime_s + \beta
\inf_{s \leq t} X^\prime_s,
\]
where $\alpha, \beta< 1$ and $(B^\prime_t; t \geq0)$ is a Brownian motion
starting from the origin. This process was introduced by Le Gall and Yor
in \cite{le1986excursions}; the reader may consult the survey \cite
{perman1997pbm}
and introduction of \cite{Chaumont2000219} for further details. In
Section~{2} of \cite{Chaumont2000219}, this definition is
generalized to accommodate nonzero initial values for the maximum
and minimum processes in the obvious way---if $i_0, s_0 \geq0$,
we take
\[
X^\prime_t = B^\prime_t + \alpha\Bigl ( \sup_{s \leq t} X^\prime_s
- s_0 \Bigr)^+ - \beta
 \Bigl( \inf_{s \leq t} X^\prime_s + i_0 \Bigr)^-,
\]
that is, $X^\prime$ hits $-i_0$ or $s_0$ before the perturbations begin.
As usual,
$a^+ = \max(a,0)$ and $a^- = \max(-a,0)$.

Our suspicion that $M$ should solve this
equation if the underlying processes are Brownian motions
is confirmed in the following lemma.

\begin{lemma}\label{l:MisaDPBM}
Suppose that $0 \leq i_0 \leq m_0 \leq s_0 \leq1$ and $\sigma= 1$.
Then, under $\Pr_{(i_0,m_0,s_0)}$,
there is a standard Brownian motion $(B^\prime_t; t \geq0)$ (adapted
to $\Filt^\straTopt$)
for which the process $M^\prime= M_t - m_0$, $t \geq0$, satisfies
\[
M^\prime_t = B^\prime_t -  \Bigl(\sup_{s \leq t} M^\prime_s -
s_0^\prime \Bigr)^+
+  \Bigl(\inf_{s \leq t} M^\prime_s + i_0^\prime \Bigr)^-, \qquad   t \leq
\tauopt,
\]
where $i_0^\prime= m_0 - i_0$ and $ s_0^\prime= s_0 - m_0$. In other words,
$M$ is a doubly perturbed Brownian motion with parameters $\alpha=
\beta= -1$.
\end{lemma}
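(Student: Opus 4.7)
I would first express $M - m_0$ as a Brownian motion minus the running deviations of $S$ and $I$ from their initial values, and then identify those deviations with the positive running extrema of $M$ via Skorokhod's lemma, thereby producing the doubly-perturbed equation.

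Since $\sigma = 1$, Theorem \ref{t:MPCSM} makes each $X_i^\straTopt$ a continuous $\Filt^\straTopt$-martingale with $[X_i^\straTopt]_t = \straTopt_i(t)$, and the three are mutually orthogonal. The sum $W_t \eqdef X_1^\straTopt(t) + X_2^\straTopt(t) + X_3^\straTopt(t)$ therefore has $[W]_t = \straTopt_1(t)+\straTopt_2(t)+\straTopt_3(t) = t$, and L\'evy's characterisation supplies a standard $\Filt^\straTopt$-adapted Brownian motion $B'$ with $W_t = i_0 + m_0 + s_0 + B'_t$. The order-statistic identity $I_t + M_t + S_t = W_t$ then rearranges to
\[
 M_t - m_0 = B'_t - (S_t - s_0) + (i_0 - I_t).
\]

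I would next appeal to Skorokhod's lemma to identify $S_t - s_0 = (\sup_{s \leq t} M_s - s_0)^+$ and symmetrically $i_0 - I_t = (i_0 - \inf_{s \leq t} M_s)^+$. For the first, the lemma asks that $S$ be continuous and non-decreasing with $S_0 = s_0$, that $S \geq M$, and that $\int_0^t (S_s - M_s)\, dS_s = 0$. The inequality $S \geq M$ is immediate. Monotonicity of $S$ follows from Lemma \ref{l:mandelbaumstrat}(c) applied inside each top subroutine, while outside such subroutines the ``run the middle'' rule keeps the $S$-realising component paused. The support condition on $dS$ is a direct consequence of property (RTM): the component currently equal to $S$ can only move when it is tied with the middle component, i.e.\ when $M = S$. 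Substituting these identifications into the displayed identity and rewriting $(\sup M_s - s_0)^+ = (\sup M'_s - s_0')^+$ and $(i_0 - \inf M_s)^+ = (\inf M'_s + i_0')^-$ in the $(s_0', i_0')$ coordinates of the lemma gives the doubly-perturbed BM equation for $M' = M - m_0$ with $\alpha = \beta = -1$.

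The main obstacle I foresee is the rigorous verification of the Skorokhod support condition. Inside a top subroutine the identity of the component realising $S$ changes continually via the Mandelbaum--Kaspi follow-the-minimum construction underpinning Lemma \ref{l:mandelbaumstrat}, and one has to show that $dS$ nevertheless concentrates on the tie set $\{M = S\}$ in the limit of the underlying multiparameter time change, with the analogous statement for $I$ at $\{M = I\}$.
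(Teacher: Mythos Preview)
Your proposal is correct and follows essentially the same route as the paper. The only cosmetic difference is in the third step: the paper asserts the identities $S_t=\sup_{s\le t}M_s\vee s_0$ and $I_t=\inf_{s\le t}M_s\wedge i_0$ directly from the (RTM) construction of $\straTopt$ in Section~\ref{s:stratexists}, whereas you package the same content through Skorokhod's lemma; the hypotheses you list for Skorokhod (monotonicity of $S$ and $dS$ supported on $\{M=S\}$) are precisely what the Section~\ref{s:stratexists} arguments deliver, so the ``main obstacle'' you flag is already handled there.
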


\begin{pf}
For simplicity we can, and do, ignore the fact that the $X_i$ are
absorbed on leaving $(0,1)$
as $\straTopt$ does not run any absorbed process before the decision time.

The multiparameter martingale $( X_1(z_1) + X_2(z_2) + X_3(z_3); z \in
\R^3_+)$
is bounded and right continuous. Hence, Theorem \ref{t:MPCSM} implies that
\[
\xi_t \eqdef X^\straTopt_1(t) + X^\straTopt_2(t) + X^\straTopt
_3(t), \qquad  t \geq0,
\]
is a continuous (single parameter) martingale with respect to the
filtration $\Filt^\straTopt$. But, the $X_i$ are independent Brownian
motions and so the same argument applies to the multiparameter martingale
\[
\bigl ( \bigl(X_1(z_1) + X_2(z_2) + X_3(z_3)\bigr)^2 - (z_1 + z_2 + z_3); z \in
\R^3_+ \bigr),
\]
that is, $\xi^2_t - t$ is a martingale. It follows that $(\xi_t; t
\geq0)$ is a
Brownian motion with $\xi_0 = i_0 + m_0 +s_0$ and we can
take $B^\prime= \xi- (i_0 + m_0 +s_0)$.

Now, $\straTopt$ always ``runs $M$'' away from the extrema $[0,1]$ and
$S$ of
$X^\straTopt$ and so 
\[
I_t = \inf_{s \leq t} M_s \wedge i_0,  \qquad   S_t = \sup_{s \leq t} M_s
\vee s_0,
\]
relationships which can be proved using the arguments of Section \ref
{s:stratexists}.
It follows that
\[
M^\prime_t = M_t - m_0 = \xi_t - m_0 - S_t - I_t = B^\prime_t
- \sup_{s \leq t} M_s \vee s_0+ s_0 - \inf_{s \leq t} M_s \wedge i_0
+ i_0.
\]

The result now follows by noting that for real $a$ and $b$
we have $a \wedge b - b = -(a - b)^-$
and $a \vee b - b = (a - b)^+$.
\end{pf}

Lemma \ref{l:MisaDPBM} is relevant because $\tauopt$ is precisely
the time taken for the doubly perturbed Brownian motion $M$ to exit
the interval $(0,1)$. In particular,
the expression we find for the Laplace transform
$\hat v_r(x)$ can be recovered from Theorems~4 and~5 in
Chaumont and Doney \cite{chaumontdoney00}.

We have so far assumed that $\sigma= 1$ and are yet to say anything
about more
general ``perturbed diffusion processes.'' 
There are several papers that
consider this problem. Doney and Zhang \cite{doneyzhang05} consider
the existence and
uniqueness of diffusions perturbed at their maximum. More recently,
Luo \cite{luo2009} has shown that solutions to 
%
\begin{equation}\label{e:DPDP}
X^\prime_t = \int_0^t \mu(s,X^\prime_s)\,ds + \int_0^t \sigma
(s,X^\prime_s)\,dB^\prime_s + \alpha\sup_{s \leq t} X^\prime_s +
\beta\inf_{s \leq t}X^\prime_s,
\end{equation}
exist and are unique, but only in the case that 
$|\alpha| + |\beta| < 1$. A more general perturbed process is
considered in
\cite{lanyingyong2009} but similar restrictions on $\alpha$ and
$\beta$ apply.

That is, there are no existence and uniqueness results for doubly
perturbed diffusions which cover our choice of $\alpha$ and $\beta$,
and less still for the Laplace transform of the distribution of
the time taken to exit an interval.

This is where our results seem to contribute something new. Lemma \ref
{l:MisaDPBM}
easily generalises to continuous $\sigma> 0$, and this combined with
the other results in this paper, lets us see that if 
$\mu$ is bounded and Borel measurable and $\sigma> 0$ is continuous,
then there is a solution to
\[
M^\prime_t = \int_0^t \mu(M^\prime_s)\,dB^\prime_s + \int_0^t
\sigma(M^\prime_s)\,dB^\prime_s - \sup_{s \leq t} M_s - \inf_{s
\leq t}M_s.
\]
Furthermore, we can compute the Laplace transform of the distribution
of the time taken for any solution of
this equation to exit any interval $(-a,b)$ when $\mu$ is zero.

\begin{remark}
While this paper was in review, we became aware of \cite
{2010arXiv10035844B}, which contains
an existence result for \eqref{e:DPDP} covering $\alpha= \beta= -1$.
\end{remark}

\section{Majority decisions of $2k+1$ diffusions and veto voting}\label{sec6}
The problem that we have solved has a natural generalization in which
there are $m$ diffusions
instead of three. In particular, one might ask for the majority
decision of an odd
number of ``diffusive voters'' $(X_i(t); t \geq0)$, $i = 1,\ldots , m$.
We believe that the optimal strategy is still to ``run the middle.'' In
other words,
if $m = 2k+1$, 
and
\[
X^\straTopt_1(t) \leq\cdots  \leq X^\straTopt_k(t) < X^\straTopt_{k+1}(t)
< X^\straTopt_{k+2}(t) \leq\cdots  \leq X^\straTopt_m(t)
\]
%
then $\straT^\star_{k+1}$ increases at unit rate
until $X^\straTopt_{k+1}$ hits either $X^\straTopt_k(t)$ or
$X^\straTopt_{k+2}(t)$.

Another variant of majority voting is ``veto voting,'' where
we have an arbitrary number $m^\prime> 0$ of diffusions,
and declare a negative decision if at least $k \leq m^\prime$ of them get
absorbed at the lower boundary (otherwise, no veto occurs and a
positive decision is made).
In fact, this is a special case of majority voting in which
some of the processes begin in an absorbed state. For example,
consider the case $2k < m^\prime$. This implies there is no veto if the
majority of voters return positive decisions. This is equivalent to
asking for a majority of $m = 2(m^\prime-k)+1$
diffusive voters, with $m+1-2k$ of them beginning in a state of
absorption at zero. The case $2k \geq m^\prime$ admits a similar description
in terms of majority voting. The analogue of the ``run the middle'' conjecture
is that if
\[
X^\straTopt_1(t) \leq\cdots  \leq X^\straTopt_{k-1}(t) < X^\straTopt_{k}(t)
< X^\straTopt_{k+1}(t) \leq\cdots \leq  X^\straTopt_{m^\prime}(t)
\]
then $\straT^\star_{k}$ should increase at unit rate
until $X^\straTopt_{k}$ hits either $X^\straTopt_{k-1}(t)$ or
$X^\straTopt_{k+1}(t)$.
In other words, we ``run the component with $k${th} order statistic.''
The extreme of this is true veto voting in which a single diffusion being
absorbed at zero will veto the others. This is the case $k=1$, and the
conjecture is that we should always ``run the minimum'' of the diffusions.

In principle, this conjecture could be tackled using the methods of
this paper
since the heuristic argument used to compute the Laplace transform of
the distribution
of the decision time still applies. The difficulty arises
because we cannot prove a more general existence result for solutions to
the analogue of \eqref{e:heureq4}.

One might also consider diffusions which obey different stochastic
differential equations. We have found an implicit equation for the
switching boundaries in the optimal strategy for $m^\prime= 2, k = 1$
``veto voting''
problem by solving a free boundary problem. However, we have no
conjecture for the general solution.

\begin{appendix}
\section*{Appendix: Results for multiparameter processes}\label{s:intromultiparam}
The proofs of Lemmas \ref{l:vhatverification} and \ref{l:MisaDPBM} appealed
to the fact that a 
multiparameter martingale composed with a strategy is again a martingale.
Moreover, it was asserted that we can approximate
an arbitrary strategy with a discrete one. This appendix contains a precise
statement of these results, together with basic definitions (adopted
from Section~{4} of
\cite{KarouiKaratzas97}).

Let $(\Omega, \Filt, \Pr)$ be a complete probability space, $\R_+$
denote the set
of nonnegative reals $[0,\infty)$ and $d \geq2$.
A family $ (\Filt(\eta),  \eta\in\R_+^d )$ of $\sigma
$-algebras contained in $\Filt$
is called a multiparameter filtration if, for every $\eta, \nu\in
\R_+^d$ with
$\eta\preceq\nu$,
\[
\Filt(\eta) \subseteq\Filt(\nu).
\]

We make the strong assumption that $\Filt$ is generated from independent
filtrations, as is in Section \ref{s:probstatement}; that is,
\[
\Filt(\eta) = \sigma(\Filt_1(\eta_1), \ldots , \Filt_d(\eta
_d)), \qquad   \eta\in\R_+^d,
\]
where $ (\Filt_i(t), t \geq0 )$, $i = 1,2,\ldots ,d,$ are
independent, right continuous, complete
filtrations. Note that this filtration satisfies the ``usual
conditions'' of
\cite{KarouiKaratzas97}.

A real-valued process $(Z(\eta),  \eta\in\R_+^d)$ is called a
multiparameter super-martingale with respect to $(\Filt(\eta),  \eta
\in\R^d)$
if for every $\eta$:
\begin{itemize}
\item$\E [|Z(\eta)| ] < \infty$, that is, $Z$ is integrable,
\item$Z(\eta)$ is $\Filt(\eta)$ measurable and
\item$\E [Z(\eta)|\Filt(\nu) ] \leq Z(\nu)$ for every
$\eta\preceq\nu$.
\end{itemize}

A strategy $\straT$ is a $\R_+^d$ valued process
such that $\straT_i$ increases from the origin, $\sum_i \straT_i(t)
= t$
and $\{\straT(t) \preceq\eta\} \in\Filt(\eta)$ for every $t \geq
0$ and
$\eta\in\R_+^d$ [conditions (C1)--(C3) from Section \ref{s:probstatement}].
For each strategy, we define a filtration $(\Filt^\straT(t),  t \geq
0)$ by
\[
\Filt^\straT(t) \eqdef\bigl\{ F \in\Filt\dvtx  F \cap\{\straT(t) \preceq
\eta\}
\in\Filt(\eta)  \ \forall\eta\in\R_+^d \bigr\},  \qquad   t \geq0.
\]

\begin{lemma}\label{l:rightctyFc}
$\Filt^\straT$ is right continuous.
\end{lemma}

\begin{pf}
Fix $t \geq0$ and suppose that $F \in\Filt^\straT(s)$ for every $s
> t$. We need
to show that $F \in\Filt^\straT(t)$, that is,
\[
F \cap \{ \straT(t) \preceq\nu \} \in\Filt(\nu
) \qquad  \mbox{for all } \nu\in\R_+^d.
\]

The trick is, for each $\nu\in\R_+^d$, to take a decreasing sequence
$\nu^n \in\R_+^d$, $n > 0$,
such that $\nu^n \to\nu$, $\nu^n_i > \nu_i$ and use continuity of
$\straT$ to write
\[
F \cap\{\straT(t) \preceq\eta\} = \bigcap_{m >0} \bigcup_{n > 0}
\{\straT(t + 1/n) \preceq\nu^m \} \cap F.
\]

By assumption, $F \in\Filt^\straT(t + 1/n)$ for each $n > 0$ and so,
by definition,
\[
\{\straT(t + 1/n) \preceq\nu^m \} \cap F \in\Filt(\nu^m)
\]
for each $m > 0$. Thus, the union
\[
A_m \eqdef\bigcup_{n > 0} \{\straT(t + 1/n) \preceq\nu^m \} \cap F
\]
is also in $\Filt(\nu^m)$. Because $\straT$ is increasing, we have
$A_{m+1} \subseteq A_m$
and so $\bigcap_{m >0} A_m = \bigcap_{m > k} A_m$ for any $k > 0$.
Hence, for any $k$,
\[
F \cap \{ \straT(t) \prec\nu \} = \bigcap_{m > k} A_m
\in\Filt(\nu^k).
\]

But, since $\Filt$ is generated from independent filtrations,
\[
\bigcap_k \Filt(\nu^k) = \Filt(\nu)
\]
by Lemma~2 of \cite{MR841588}.\footnote{A remark in this paper warns
that the conclusion may be false if the filtrations are not
independent!} This concludes the proof.
\end{pf}

The process
\[
Z^\straT\eqdef \bigl( Z_1(\straT_1(t)), \ldots , Z_d(\straT
_d(t));  t \geq0 \bigr)
\]
is adapted to this filtration. The idea is that $Z^\straT$ should be a
super-martingale with
respect to $\Filt^\straT$. Indeed, Proposition~4.3 in \cite
{KarouiKaratzas97} is
the following.
\begin{theorem}\label{t:MPCSM}
Suppose that $Z$ is a right continuous multi-parameter
super-martingale and that $\straT$ is a strategy.
Then $Z^\straT$ is a (local) $\Filt^\straT$-super-martingale.
\end{theorem}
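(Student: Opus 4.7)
The plan is to reduce Theorem~\ref{t:MPCSM} to the multiparameter optional-sampling inequality for stopping points and then establish the latter by a discretisation argument, leaning crucially on condition (F4). First, observe that by definition $Z^\straT(s) = Z(\straT(s))$ and $Z^\straT(t) = Z(\straT(t))$, that $\straT(s) \preceq \straT(t)$ because each $\straT_i$ is non-decreasing, and that the filtration $\Filt^\straT(s)$ is precisely the customary $\sigma$-algebra of events observable by the stopping point $\straT(s)$. So the theorem reduces to proving that for any multi-parameter stopping points $\sigma \preceq \tau$ of $\Filt$,
\[
  \E\bigl(Z(\tau) \,\big|\, \Filt^\straT(s)\bigr) \leq Z(\sigma), \qquad \sigma = \straT(s),\ \tau = \straT(t).
\]

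Next, I would discretise. For each integer $n \geq 1$, define
\[
  \sigma^{(n)}_i \eqdef 2^{-n}\lceil 2^n \sigma_i\rceil, \qquad \tau^{(n)}_i \eqdef 2^{-n}\lceil 2^n \tau_i\rceil.
\]
A short check shows these are stopping points of $\Filt$ taking values in the dyadic grid: from $\{\sigma^{(n)} \preceq \eta\} = \{\sigma \preceq 2^{-n}\lfloor 2^n\eta\rfloor\} \in \Filt(2^{-n}\lfloor 2^n\eta\rfloor) \subseteq \Filt(\eta)$, combined with right continuity of the multiparameter filtration. Moreover $\sigma^{(n)} \preceq \tau^{(n)}$, and both decrease to their limits as $n\to\infty$.

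The core of the argument is the discrete case. For stopping points $\sigma, \tau$ taking values in a finite dyadic grid, the desired inequality can be proved by induction on the number of distinct values of $\tau - \sigma$, one coordinate at a time. Conditioning on $\{\sigma = a, \tau = b\}$ reduces the step to an inequality of the form $\E(Z(b)|\Filt(a)) \leq Z(a)$ along a path $a = a^{(0)} \preceq a^{(1)} \preceq \cdots \preceq a^{(k)} = b$ in which successive points differ in only one coordinate. Each single-coordinate move is handled by the defining multiparameter super-martingale property, and condition (F4) --- conditional independence of $\Filt(\eta)$ and $\Filt(\nu)$ given $\Filt(\eta \wedge \nu)$ --- is exactly what lets us chain the moves together, stripping off one coordinate at a time without interference from the others. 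Finally, letting $n \to \infty$ and using right continuity of $Z$ and of $\Filt$ recovers the inequality for general $\sigma \preceq \tau$; for a local super-martingale one first localises via a sequence $T_n \uparrow \infty$ of stopping points reducing $Z$, applies the above to the stopped process, and then removes the localisation.

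The principal obstacle is the inductive step, because it is genuinely (F4) that makes multiparameter optional sampling work --- without the conditional independence assumption it is well known that the statement can fail. Happily, in our setting (F4) is automatic, since $\Filt(\eta) = \sigma(\Filt_1(\eta_1), \Filt_2(\eta_2), \Filt_3(\eta_3))$ is the product $\sigma$-algebra of independent one-parameter filtrations, so conditional independence of $\Filt(\eta)$ and $\Filt(\nu)$ given $\Filt(\eta\wedge\nu)$ holds coordinatewise and hence overall.
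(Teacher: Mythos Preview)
The paper does not actually prove this theorem: it cites Proposition~4.3 of El~Karoui--Karatzas and merely flags Lemma~\ref{l:approxstrat} (approximation of an arbitrary strategy by $\epsilon$-strategies) as a stepping stone towards a proof. So there is no detailed argument in the paper to compare against, only a hinted route. That hinted route differs from yours: the paper would approximate the \emph{strategy} $\straT$ by discrete strategies $\straT^\epsilon \in \Pi_\epsilon$, establish the super-martingale property for $Z^{\straT^\epsilon}$ (whose stopping points lie on a grid), and then let $\epsilon \downarrow 0$. You instead fix $s \leq t$ and discretise the \emph{stopping points} $\straT(s), \straT(t)$ directly by dyadic rounding, prove optional sampling on the grid via (F4), and pass to the limit using right continuity. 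Both routes are standard and both work; yours is the textbook Kurtz/Cairoli--Walsh argument and is arguably more direct, while the paper's route has the advantage of re-using Lemma~\ref{l:approxstrat}, which it needs anyway for Lemma~\ref{l:submgeachcmpnt}.

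Two small remarks on your sketch. First, your description of the inductive step (``conditioning on $\{\sigma = a, \tau = b\}$ and moving along a path one coordinate at a time'') is correct in spirit but elides the point where (F4) actually bites: it is when you condition on $\Filt_\sigma$ with $\sigma$ itself random across several coordinates that conditional independence is needed to peel off one coordinate cleanly. Second, your comment about localising via stopping points $T_n \uparrow \infty$ is misplaced: here $Z$ is assumed to be a genuine super-martingale and the stopping points $\straT(t)$ are automatically bounded (indeed $\straT_i(t) \leq t$), so multiparameter optional sampling for bounded stopping points yields the genuine super-martingale property of $Z^\straT$ directly. The ``(local)'' in the statement is a hedge for situations not arising in the paper's applications, where $Z$ is in fact bounded.
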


This theorem appears in various guises throughout the literature
(a good reference for the discrete case is Chapter~1
of \cite{CairoliDalang96}), we do not give the proof.
Merely, we will mention one of its stepping stones---approximation of an arbitrary strategy with a discrete one.

Recall from Definition \ref{d:epsilonstrat} that for any
$\epsilon> 0$, $\Pi_\epsilon$ denotes the set of
strategies which only increase in one component
over each interval $[k\epsilon, (k+1)\epsilon)$, $k = 0,1,\ldots ,$
that is, $\straT^\epsilon$ is in $\Pi_\epsilon$ if
$\dot\straT_i$ a.e. takes only values $0$ or $1$
and is constant on each interval $(k\epsilon, (k+1)\epsilon)$. The promised
approximation result is
the following lemma.

\begin{lemma}\label{l:approxstrat}
\textup{(a)} For any strategy $\straT$, there exist
a family of strategies $\straT^\epsilon\in\Pi_\epsilon$, $\epsilon
> 0$
that converge to $\straT$ in the sense that
\[
\lim_{\epsilon\to0} \sup_{t \geq0} |\straT(t) - \straT^\epsilon
(t)| = 0,
\]
where $|\cdot|$ is any norm on $\R^d$.

\textup{(b)} Moreover, there is a positive constant $M > 0$ for which $\straT
(t) \preceq\straT^\epsilon(t + M\epsilon)$ for every $t \geq0$.
\end{lemma}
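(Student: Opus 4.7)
The plan is to construct $\straT^\epsilon$ by choosing, on each grid interval $[k\epsilon,(k+1)\epsilon)$, a single coordinate $i_k$ to advance at unit rate. Concretely, I would use a greedy look-behind rule: fix an integer $M\geq d$ and set, at each step $k$,
\[
 D^k_i \eqdef \straT_i((k-M)^+\epsilon)-\straT^\epsilon_i(k\epsilon),\quad i=1,\ldots,d,
\]
taking $i_k$ to be the smallest index attaining $\max_i D^k_i$, and $\straT^\epsilon(t)=\straT^\epsilon(k\epsilon)+(t-k\epsilon)e_{i_k}$ for $t\in[k\epsilon,(k+1)\epsilon)$. Linear interpolation then places $\straT^\epsilon$ in $\Pi_\epsilon$.

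The first step would be to verify adaptedness. Since $(k-M)^+\epsilon$ is deterministic, $\straT((k-M)^+\epsilon)$ is an $\FiltMP$-stopping point by (C3), and $\straT^\epsilon(k\epsilon)$ is determined inductively by previous greedy choices, so $i_k$ is measurable with respect to the $\sigma$-algebra generated by these. A short induction, using right-continuity of $\FiltMP$ and property (F4), promotes this to the statement that $\straT^\epsilon(t)$ is itself an $\FiltMP$-stopping point for every $t\geq 0$, so (C3) holds for $\straT^\epsilon$.

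Next I would bound the deficits $D^k$. One observes that $\sum_i D^k_i = -\min(k,M)\epsilon$, which is constant for $k\geq M$, and that each coordinate of $D^k$ changes by at most $\epsilon$ in a single step (the chosen coordinate drops by $\epsilon-\Delta^k_{i_k}\in[0,\epsilon]$, others grow by $\Delta^k_i\in[0,\epsilon]$). Because the greedy rule always acts on the worst offender while the sum of deficits is constrained, an inductive argument gives $\|D^k\|_\infty\leq C_{d,M}\epsilon$ for a finite constant $C_{d,M}$. This translates to $|\straT_i((k-M)^+\epsilon)-\straT^\epsilon_i(k\epsilon)|\leq C_{d,M}\epsilon$ for every $i,k$. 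Combined with the Lipschitz-$1$ continuity of $\straT$ and $\straT^\epsilon$ and linear interpolation on each grid interval, this yields $\sup_{t\geq 0}|\straT(t)-\straT^\epsilon(t)|\leq(C_{d,M}+M+1)\epsilon$, giving the uniform convergence as $\epsilon\to 0$. For the domination, the lower bound $\straT^\epsilon_i(k\epsilon)\geq \straT_i((k-M)^+\epsilon)-C_{d,M}\epsilon$ rearranges, after shifting by $M+C_{d,M}$ grid steps and absorbing one more step of interpolation, to $\straT(t)\preceq\straT^\epsilon(t+M'\epsilon)$ with $M':=M+C_{d,M}+1$.

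The main technical obstacle is the uniform bound on $\|D^k\|_\infty$. At first glance the greedy rule shrinks only one coordinate's deficit per step while the other $d-1$ coordinates can each grow by up to $\epsilon$, raising the fear of an unbounded drift. The estimate survives because the sum constraint $\sum_i D^k_i\equiv -M\epsilon$ (for $k\geq M$) prevents all coordinates from growing simultaneously, and because whenever one coordinate's deficit is large the greedy selection is compelled to target it at the next step. Quantifying this requires a potential-function argument comparing $\max_i D^k_i$ to an aggregate of the deficit vector; verifying this bound is where the bulk of the combinatorial work lies. By contrast, once the discrepancy estimate is in hand, the uniform convergence and domination conclusions are essentially bookkeeping.
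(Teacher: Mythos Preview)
The paper does not actually prove this lemma: it simply records that the existence and uniform convergence statement is Theorem~7 of Mandelbaum \cite{mandelbaum1987cma}, and that the domination clause is a corollary of Mandelbaum's constructive proof, with all details omitted. So there is nothing to compare against except the bare citation; your proposal is effectively an attempt to reconstruct (a version of) Mandelbaum's argument.

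Your greedy look-behind construction is a reasonable blueprint and is in the same spirit as the cited result. One point deserves more care, however: you treat the adaptedness check (C3) and the deficit bound $\|D^k\|_\infty\le C_{d,M}\epsilon$ as logically separate steps, but they are entangled. To verify that $\straT^\epsilon((k+1)\epsilon)$ is a stopping point you need the event $\{\straT^\epsilon((k+1)\epsilon)\preceq\eta\}$ to lie in $\Filt(\eta)$, and this event is determined by $i_k$, which in turn depends on the random variable $\straT((k-M)^+\epsilon)$. Knowing only that the latter is a stopping point is not enough: you need that its value is $\Filt(\eta)$-measurable on the event in question, and the natural way to secure this is the pathwise domination $\straT((k-M)^+\epsilon)\preceq \straT^\epsilon(k\epsilon)$, i.e.\ $D^k_i\le 0$ for every $i$. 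This is a \emph{one-sided} inequality, sharper than the two-sided bound you set out to prove, and it should be carried through the induction simultaneously with (C3). Once $D^k\le 0$ is established, the constraint $\sum_i D^k_i=-M\epsilon$ immediately gives $-M\epsilon\le D^k_i\le 0$, so the uniform bound and the domination $\straT(t)\preceq\straT^\epsilon(t+M\epsilon)$ drop out without any further potential-function argument. In short, the ``main technical obstacle'' you identify largely dissolves if the induction is organised around $D^k\le 0$ rather than $\|D^k\|_\infty\le C\epsilon$; conversely, without some such domination the adaptedness step as you have sketched it does not obviously go through.
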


Part (a) of this lemma is exactly Theorem~7 of Mandelbaum \cite
{mandelbaum1987cma} and part (b)
follows from directly from the constructive proof of (a). The details
are omitted.
\end{appendix}

%

\printaddresses

\end{document}